\newcommand{\be}{\begin{equation}}
\newcommand{\ee}{\end{equation}}
\newcommand{\dalign}[1]{\[\begin{aligned} #1 \end{aligned}\]}
\newcommand{\euL}{\EuScript{L}}
\newcommand{\er}{\mathrm{e}}
\newcommand{\kzeta}{{\frac{\zeta~~}{\zeta}}^{\hskip-5pt(k)}\hskip-5pt}
\title[Riemann Hypothesis via the generalized von Mangoldt function]
{The Riemann Hypothesis via \\
the generalized von Mangoldt function}
\author[W.~Banks]{William Banks}
\address{Department of Mathematics, 
         University of Missouri, 
         Columbia MO 65211, USA.}
\email{bankswd@missouri.edu}
\author[S.~Sinha]{Saloni Sinha}
\address{Department of Mathematics, 
         University of Missouri, 
         Columbia MO 65211, USA.}
\email{ssvf4@mail.missouri.edu}
\date{\today}
\begin{document}

\begin{abstract}
Gonek, Graham, and Lee have shown recently that the
Riemann Hypothesis (RH) can be reformulated
in terms of certain asymptotic estimates for twisted sums
with von Mangoldt function $\Lambda$. Building on their ideas,
for each $k\in\N$, we study twisted sums with the
\emph{generalized von Mangoldt function}
$$
\Lambda_k(n)\defeq\sum_{d\,\mid\,n}\mu(d)\Big(\log\frac{n}{d}\,\Big)^k
$$
and establish similar connections with RH.
For example, for $k=2$ we show
that RH is equivalent to the assertion that,
for any fixed $\eps>0$, the estimate
$$
\sum_{n\le x}\Lambda_2(n)n^{-iy}
=\frac{2x^{1-iy}(\log x-C_0)}{(1-iy)}
-\frac{2x^{1-iy}}{(1-iy)^2}
+O\big(x^{1/2}(x+|y|)^\eps\big)
$$
holds uniformly for all $x,y\in\R$, $x\ge 2$; hence, the validity
of RH is governed by the distribution of almost-primes in the
integers. We obtain similar results for the function
$$
\Lambda^k\defeq\mathop{\underbrace{\,\Lambda\star\cdots\star\Lambda\,}}\limits_{k\text{~copies}}\,,
$$
the $k$-fold convolution of the von Mangoldt function.
\end{abstract}

\thanks{MSC Primary: 11M26; Secondary: 11M06.}

\thanks{
\textbf{Keywords:} von Mangoldt function, Riemann hypothesis.}

\maketitle

%%%%%%%%%%%%%%%%%%%%%%%%
%%%%% PAPER BEGINS %%%%%
%%%%%%%%%%%%%%%%%%%%%%%%

\tableofcontents

\newpage
\section{Introduction}

\subsection{Background}

Let $\zeta(s)$ be the Riemann zeta function. In terms of
the complex variable $s=\sigma+it$, the
\emph{Riemann Hypothesis} (RH) is the assertion that all of the
nontrivial zeros of $\zeta(s)$ lie on the line $\sigma=\tfrac12$;
in other words,
$$
\sigma>\tfrac12
\quad\Longrightarrow\quad\zeta(\sigma+it)\ne 0.
$$
In a recent paper, Gonek, Graham, and Lee~\cite{GGL} have shown
that a necessary and sufficient condition for the truth of
the Riemann Hypothesis is that for any fixed
constants $\eps,B>0$, one has the uniform estimate
\be\label{eq:GGL}
\sum_{n\le x}\Lambda(n) n^{-iy}
=\frac{x^{1-iy}}{1-iy}+O(x^{1/2}|y|^\eps)
\qquad(2\le x\le |y|^B),
\ee
where $\Lambda$ is the von Mangoldt function; see \cite[Thm.~1]{GGL}
and its proof. In the present paper, we study two distinct families of sums
that are similar to the sum $\sum_{n\le x}\Lambda(n) n^{-iy}$
above, and we show that the appropriate
 analogue of \eqref{eq:GGL} also yields a necessary and
sufficient condition for the truth of RH.
Our results suggest that, in addition to influencing the distribution
of primes, \emph{the zeros of the zeta function also exert a strong influence on
the distribution of almost-primes} and other related classes of integers.

\subsection{Statement of results}

In this paper, we study twisted sums of the form
\be\label{eq:psi-up-k}
\psi^k(x,y)\defeq\sum_{n\le x}\Lambda^k(n)n^{-iy}\qquad(k\in\N,~x,y\in\R),
\ee
where $\Lambda^k$ denotes the $k$-fold convolution of
the von Mangoldt function $\Lambda$:
$$
\Lambda^k\defeq\mathop{\underbrace{\,\Lambda\star\cdots\star\Lambda\,}}\limits_{k\text{~copies}}.
$$
We have
\be\label{eq:identity-one}
\sum_{n=1}^\infty\frac{\Lambda^k(n)}{n^s}
=(-1)^k\Big\{\frac{\zeta'}{\zeta}(s)\Big\}^k
\qquad(\sigma>1).
\ee
Note that the function $\Lambda^k$ is supported on the set 
of natural numbers that have at most $k$ distinct prime divisors,
i.e., positive integers for which $\omega(n)\le k$. 
\begin{theorem}\label{thm:main-forward}
Fix $k\in\N$. If the Riemann Hypothesis is true, then
$$
\psi^k(x,y)=(-1)^k\mathop{{\tt Res}}\limits_{w=1-iy}
\bigg(\Big\{\frac{\zeta'}{\zeta}(w+iy)\Big\}^k\frac{x^w}{w}\bigg)
+O\big(x^{1/2}\{\log(x+|y|)\}^{2k+1}\big)
$$
holds uniformly for all $x,y\in\R$, $x\ge 2$,
where the implied constant depends only on $k$.
The residual term can be omitted if $|y|>\sqrt{x}$,
and the exponent $2k+1$ can be replaced by $2$ in
the case that $k=1$.
\end{theorem}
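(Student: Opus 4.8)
The plan is to proceed via contour integration (Perron's formula), following the classical approach used to prove prime-number-theorem-type estimates, but carried out with explicit attention to uniformity in the parameter $y$. Starting from the Dirichlet series identity \eqref{eq:identity-one}, we apply a truncated Perron formula to write
\[
\psi^k(x,y)=\frac{1}{2\pi i}\int_{c-iT}^{c+iT}(-1)^k\Big\{\frac{\zeta'}{\zeta}(s+iy)\Big\}^k\frac{x^s}{s}\,ds+(\text{error}),
\]
with $c=1+1/\log x$ and a parameter $T$ to be chosen (something like $T\asymp x^{1/2}$ or a small power of $x+|y|$). The Perron error term is controlled in the usual way using the bound $|\Lambda^k(n)|\le(\log n)^k$ together with $\sum_{n\le x}(\log n)^k\ll x(\log x)^k$; this contributes an acceptable $O\big(x^{1/2}(\log(x+|y|))^{O(k)}\big)$ provided $T$ is at least a fixed power of $x$.

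Next I would shift the contour to the left, to the line $\Re(s)=\tfrac12-\delta$ for a suitable small $\delta$ (or, more cleanly, to $\Re(s+iy)=\tfrac12+1/\log(x+|y|)$, i.e. just to the right of the critical line shifted by $iy$). Under RH, $\zeta(w)\ne 0$ for $\Re(w)>\tfrac12$, so the only pole of the integrand inside the contour is at $w=s+iy=1$, i.e. $s=1-iy$, coming from the $k$-th power of the simple pole of $\zeta'/\zeta$ at $w=1$; this pole of order $k+1$ produces exactly the residue term displayed in the theorem. The contribution from the new vertical segment and from the two horizontal segments at height $\pm T$ must then be shown to be $O\big(x^{1/2}(\log(x+|y|))^{2k+1}\big)$. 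This is where the real work lies: one needs, under RH, a good upper bound for $\big|(\zeta'/\zeta)(\sigma+i\tau)\big|$ in the region $\sigma\ge\tfrac12+1/\log(|\tau|+2)$, uniformly in $\tau$. The standard conditional estimate gives $\zeta'/\zeta(s)\ll (\log(|t|+2))^{2}$ near the critical line (more precisely $\ll (\log(|t|+2))^{2-2\sigma}$-type bounds, or cruder powers of $\log$), which raised to the $k$-th power and integrated against $x^{s}/s$ over the shifted line of length $\asymp T$ yields the stated power $2k+1$ of the logarithm, the extra $+1$ absorbing the $\int dt/t\asymp\log T$. For $k=1$ the sharper classical analysis (as in the proof of \cite[Thm.~1]{GGL}, or Titchmarsh) gives the exponent $2$ rather than $3$.

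The main obstacle, and the step requiring the most care, is obtaining the conditional bound on $(\zeta'/\zeta)^k$ that is \emph{uniform in $y$} and strong enough near the critical line, and then handling the horizontal segments: one must either choose $T$ to avoid ordinates where $\zeta$ is abnormally small, or use the average bound $\int_1^T |(\zeta'/\zeta)(\sigma+it)|^{2k}\,dt$ together with a suitable smoothing, since a pointwise bound at a bad height $T$ could destroy the estimate. A clean way around this is the familiar device of averaging over $T\in[T_0,2T_0]$ and using that the set of "bad" heights has small measure (a consequence of the conditional estimate $N(T+1)-N(T)\ll\log T$ for the zero-counting function, which holds unconditionally); alternatively one invokes the Landau–type bound $(\zeta'/\zeta)(s)=\sum_{|\gamma-t|\le 1}\frac{1}{s-\rho}+O(\log|t|)$ and chooses $T$ with $\min_\gamma|T-\gamma|\gg 1/\log T$. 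Finally, the claim that the residual term can be dropped when $|y|>\sqrt x$ follows because in that range the pole at $s=1-iy$ lies at height $|y|>\sqrt x$, so after shifting one simply does not cross it (or its contribution is already absorbed into the error term), and one instead truncates Perron's formula at a smaller $T<|y|$ and shifts only the bounded portion; I would present this as a short remark at the end rather than a separate argument. Throughout, the dependence of all implied constants on $k$ (and only on $k$) is tracked by noting that every application of a $\zeta'/\zeta$ bound is raised to a fixed power depending on $k$, and the binomial-type expansion of the residue at a pole of order $k+1$ involves at most $O_k(1)$ terms.
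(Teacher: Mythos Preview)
Your plan matches the paper's proof essentially step for step: Perron at $c=1+1/\log x$ with $T\asymp\sqrt x$, then for $k\ge2$ a shift only to $\sigma=\tfrac12+1/\log x$ using the conditional bound $(\zeta'/\zeta)(s)\ll\log(x\tau)\log\tau$ in that strip (raised to the $k$th power and integrated to give the exponent $2k+1$), while for $k=1$ one shifts past the critical line and collects the residues at the zeros to recover the exponent~$2$. Two small corrections: the pole of the integrand at $s=1-iy$ has order $k$, not $k+1$ (since $x^s/s$ is regular there); and for $k\ge2$ your worry about the horizontal segments is unnecessary, because the shifted contour never leaves the region where the conditional $(\zeta'/\zeta)$-bound is uniform, so no special choice or averaging of $T$ is required in that case.
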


The proof is a straightforward
application of Perron's formula coupled with bounds on 
$\zeta'(s)/\zeta(s)$. When $k=1$, we follow closely the proof
of \cite[Thm.~1]{GGL} (see also \cite[Thm.~13.1]{MontVau}). For
larger values of $k$, a slightly different approach is needed
since we do not know that the zeros of the zeta function are all simple
(this is not a problem when $k=1$). At the cost of an extra factor $\log(x+|y|)$
in the error term, we avoid this issue by shifting our line of integration
close to \emph{but not beyond} the vertical line $\sigma=\frac12$.

Theorem~\ref{thm:main-forward} (with $k\defeq 1$) yields the uniform estimate
$$
\sum_{n\le x}\Lambda(n)n^{-iy}=\frac{x^{1-iy}}{1-iy}
+O\big(x^{1/2}\{\log(x+|y|)\}^2\big)\qquad(x,y\in\R,~x\ge 2)
$$
under RH. This result strengthens the estimate \eqref{eq:GGL}
obtained in~\cite{GGL}. Moreover, taking $y\defeq 0$,
we recover the well known result of von Koch \cite{vonKoch} which
asserts that, under RH, one has
$$
\psi(x)=x+O\big(x^{1/2}(\log x)^2\big)\qquad(x\ge 2),
$$
where $\psi$ is the Chebyshev function.
As another example, Theorem~\ref{thm:main-forward} (with $k\defeq 2$) provides
the conditional estimate
$$
\sum_{n\le x}(\Lambda\star\Lambda)(n)n^{-iy}
=\frac{x^{1-i y}(\log x-2C_0)}{1-iy}
-\frac{x^{1-i y}}{(1-iy)^2}
+O\big(x^{1/2}\{\log(x+|y|)\}^5\big)
$$
holds, where $C_0$ is the Euler-Mascheroni constant. In particular, under RH
we have
$$
\sum_{n\le x}(\Lambda\star\Lambda)(n)
=x(\log x-2C_0-1)+O\big(x^{1/2}(\log x)^5\big).
$$

Our next result is the following strong converse of Theorem~\ref{thm:main-forward}.

\begin{theorem}\label{thm:main-back}
Fix $k\in\N$, and suppose that for any $\eps>0$ the estimate
$$
\psi^k(x,y)=(-1)^k\mathop{{\tt Res}}\limits_{w=1-iy}
\bigg(\Big\{\frac{\zeta'}{\zeta}(w+iy)\Big\}^k\frac{x^w}{w}\bigg)
+O\big(x^{1/2}(x+|y|)^\eps\big)
$$
holds uniformly for all $x,y\in\R$, $x\ge 2$,
where the implied constant depends only on $k$ and $\eps$.
Then the Riemann Hypothesis is true.
\end{theorem}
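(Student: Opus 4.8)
The strategy is to convert the hypothesized asymptotic for $\psi^k(x,y)$ into an analytic continuation of a Mellin transform, and then to note that a zero of $\zeta$ in the half-plane $\operatorname{Re} s>\tfrac12$ would obstruct that continuation. Since the hypothesis is assumed uniformly in $y$, it suffices to work with a single value of $y$; in fact $y=0$ already works, but I keep $y$ general to parallel Theorem~\ref{thm:main-forward}. Fix $y\in\R$ and write $\psi^k(x,y)=M_y(x)+E_y(x)$, where
\[
M_y(x)\defeq(-1)^k\mathop{{\tt Res}}\limits_{w=1-iy}\bigg(\Big\{\frac{\zeta'}{\zeta}(w+iy)\Big\}^k\frac{x^w}{w}\bigg),
\]
so that $E_y(x)\ll_{k,\eps}x^{1/2}(x+|y|)^{\eps}$ for every $\eps>0$ by hypothesis. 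Expanding $\zeta'/\zeta$ about its pole at $s=1$ shows that $M_y(x)=x^{1-iy}P_y(\log x)$ for a polynomial $P_y$ of degree $k-1$, so in particular $M_y(x)\ll_{k,y}x(\log x)^{k-1}$.

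Next, for $\operatorname{Re} w>1$, identity~\eqref{eq:identity-one}, the non-negativity of $\Lambda^k$, and partial summation give
\[
(-1)^k\Big\{\frac{\zeta'}{\zeta}(w+iy)\Big\}^k=\sum_{n\ge 1}\Lambda^k(n)n^{-iy}n^{-w}=w\int_1^\infty\frac{\psi^k(x,y)}{x^{w+1}}\,dx=H_y(w)+G_y(w),
\]
where $H_y(w)\defeq w\int_1^\infty M_y(x)x^{-w-1}\,dx$ and $G_y(w)\defeq w\int_1^\infty E_y(x)x^{-w-1}\,dx$. The first integral is elementary (substitute $u=\log x$): it converges for $\operatorname{Re} w>1$ and equals a rational function of $w$ whose only pole is at $w=1-iy$, of order at most $k$; hence $H_y$ continues meromorphically to the whole complex plane with that single pole. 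For the second, the bound on $E_y$ gives $\int_1^\infty|E_y(x)|\,x^{-\operatorname{Re} w-1}\,dx\ll_{k,\eps}(1+|y|)^{\eps}\big(\operatorname{Re} w-\tfrac12-\eps\big)^{-1}$ whenever $\operatorname{Re} w>\tfrac12+\eps$; since $\eps>0$ is arbitrary, the integral converges absolutely and locally uniformly on $\operatorname{Re} w>\tfrac12$, so $G_y$ is holomorphic there.

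Rearranging gives $G_y(w)=(-1)^k\{\zeta'/\zeta(w+iy)\}^k-H_y(w)$ for $\operatorname{Re} w>1$; the right-hand side is meromorphic everywhere, so this identity persists by analytic continuation throughout $\operatorname{Re} w>\tfrac12$. As $G_y$ is holomorphic on that half-plane, it follows that $(-1)^k\{\zeta'/\zeta(w+iy)\}^k-H_y(w)$ has no pole in $\operatorname{Re} w>\tfrac12$. Now suppose RH fails, so that $\zeta(\rho_0)=0$ for some $\rho_0$ with $\operatorname{Re}\rho_0>\tfrac12$; necessarily $\rho_0\ne 1$. Taking $y=0$, the function $\{\zeta'/\zeta(w)\}^k$ has a pole at $w=\rho_0$ (since $\zeta'/\zeta$ has a simple pole there), while $H_0$ is holomorphic at $\rho_0$ because its only pole lies at $w=1$; hence $(-1)^k\{\zeta'/\zeta(w)\}^k-H_0(w)$ has a pole at $\rho_0$ with $\operatorname{Re}\rho_0>\tfrac12$---contradiction. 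Therefore RH holds.

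I do not anticipate a genuine obstacle. The work lies in justifying the partial-summation identity and the analytic continuation, and---the one point that truly matters---in verifying that the main-term transform $H_y$ is holomorphic away from $w=1-iy$, so that it cannot cancel a hypothetical off-line pole of $\{\zeta'/\zeta\}^k$. The uniformity of the hypothesis in $x$ (all $x\ge 2$, rather than only $x\le|y|^B$ as in~\cite{GGL}) is precisely what makes the Mellin transform available and the deduction this direct.
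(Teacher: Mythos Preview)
Your argument is correct and is, in fact, more direct than the paper's own proof. Both proofs begin the same way: form the Mellin-type integral of the remainder $E_y(x)=\psi^k(x,y)-M_y(x)$ and identify it (after an elementary computation of $H_y$) with $(-1)^k\{\zeta'/\zeta(w+iy)\}^k$ minus a rational function whose only pole sits at $w=1-iy$. At this point you simply observe that the hypothesis $E_y(x)\ll x^{1/2+\eps}$ for every $\eps>0$ forces $G_y(w)=w\int_1^\infty E_y(x)x^{-w-1}\,dx$ to be holomorphic on $\operatorname{Re}w>\tfrac12$, so $\{\zeta'/\zeta\}^k$ can have no pole there, and RH follows. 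This is Landau's classical continuation argument, and it goes through cleanly here because the hypothesis covers \emph{all} $x\ge 2$. (One cosmetic point: the bound on $E_y$ is only assumed for $x\ge2$, so strictly speaking split $\int_1^\infty=\int_1^2+\int_2^\infty$; on $[1,2)$ one has $\psi^k(x,y)=0$ and $M_y(x)=O_{k,y}(1)$, so the first piece is entire.)

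The paper instead follows the template of~\cite{GGL}: it introduces an auxiliary kernel $\kappa(\hat s)$ engineered so that $\kappa K$ has a single simple pole at a putative off-line zero $\rho_0$, evaluates an inverse-Mellin integral $\frac{1}{2\pi i}\int h(s)H(s)e^{s\log x}\,ds$ in two ways, and extracts a growth contradiction $x^{\beta_0+1}\ll x^{3/2+\eps}$. That machinery is genuinely needed in~\cite{GGL}, where the error bound is only assumed for $x\le|y|^B$ and so the Mellin integral $\int_1^\infty E_y(x)x^{-w-1}\,dx$ cannot be controlled directly. Under the present, stronger hypothesis that step is unnecessary, and your shortcut is legitimate. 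What the paper's longer route buys is robustness: its argument would survive weakening the hypothesis back to the restricted range, whereas yours would not.
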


Our proof is an adaptation of the second half of the proof of \cite[Thm.~1]{GGL}

\bigskip
We also consider twisted sums with the \emph{generalized von Mangoldt
function} (see, for example, \cite[\S1.4]{IwanKow}). More specifically, 
we study sums of the form
$$
\psi_k(x,y)\defeq\sum_{n\le x}\Lambda_k(n)n^{-iy}\qquad(k\in\N,~x,y\in\R),
$$
where $\Lambda_k\defeq \mu\star L^k$ with $\mu$ the M\"obius function
and $L$ the natural logarithm; in other words,
\be\label{eq:gen-vonM-defn}
\Lambda_k(n)\defeq\sum_{d\,\mid\,n}\mu(d)\Big(\log\frac{n}{d}\,\Big)^k.
\ee
Note that
\be\label{eq:identity-two}
\sum_{n=1}^\infty\frac{\Lambda_k(n)}{n^s}
=(-1)^k\kzeta(s)\qquad(\sigma>1).
\ee
The function $\Lambda_k$ is supported on the set 
of natural numbers that have no more than $k$ distinct prime divisors.

\begin{theorem}\label{thm:main2-forward}
Fix $k\in\N$. If the Riemann Hypothesis is true, then the estimate
$$
\psi_k(x,y)=(-1)^k\mathop{{\tt Res}}\limits_{w=1-iy}
\bigg(\kzeta(w+iy)\frac{x^w}{w}\bigg)
+O\big(x^{1/2}\{\log(x+|y|)\}^{2k+1}\big)
$$
holds uniformly for all $x,y\in\R$, $x\ge 2$,
where the implied constant depends only on $k$.
The residual term can be omitted if $|y|>\sqrt{x}$,
and the exponent $2k+1$ can be replaced by $2$ in
the case that $k=1$.
\end{theorem}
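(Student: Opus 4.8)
The plan is to follow the proof of Theorem~\ref{thm:main-forward} almost line for line, with the Dirichlet series $(-1)^k\{\zeta'/\zeta(s)\}^k$ of $\Lambda^k$ replaced throughout by the Dirichlet series $(-1)^k\kzeta(s)=(-1)^k\zeta^{(k)}(s)/\zeta(s)$ of $\Lambda_k$ furnished by~\eqref{eq:identity-two}; the only genuinely new analytic ingredient is a uniform bound for $\zeta^{(k)}/\zeta$ on a vertical line lying just to the right of $\sigma=\tfrac12$, established below. Write $L\defeq\log(x+|y|)$ and assume $x+|y|$ exceeds a suitable absolute constant, the complementary range being trivial. First I would apply a truncated Perron formula to the sequence $n\mapsto\Lambda_k(n)n^{-iy}$, whose generating Dirichlet series $(-1)^k\zeta^{(k)}(w+iy)/\zeta(w+iy)$ converges absolutely for $\mathrm{Re}\,w>1$: taking $c\defeq1+1/\log x$ and truncation height $T\defeq(x+|y|)^2$, and using $0\le\Lambda_k(n)\le(\log n)^k$ together with $\sum_n\Lambda_k(n)n^{-c}=(-1)^k\zeta^{(k)}(c)/\zeta(c)\ll_k(\log x)^k$, the standard estimate for the Perron tail gives
\[
\psi_k(x,y)=\frac{(-1)^k}{2\pi i}\int_{c-iT}^{c+iT}\frac{\zeta^{(k)}}{\zeta}(w+iy)\,\frac{x^w}{w}\,dw+O_k\!\big((\log x)^k\big),
\]
an error comfortably within the target.

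Next I would shift the line of integration to $\mathrm{Re}\,w=\tfrac12+\tfrac1L$. Under RH every zero $\rho$ of $\zeta$ has $\mathrm{Re}\,\rho=\tfrac12$, so on the closed rectangle with vertices $\tfrac12+\tfrac1L\pm iT$ and $c\pm iT$ the integrand is holomorphic except for a single interior pole at $w=1-iy$ (coming from the pole of $\zeta^{(k)}/\zeta$ at $s=1$; it is interior since $|y|<T$), while the remaining poles of the integrand — at $w=\rho-iy$ and at $w=0$ — lie strictly to the \emph{left} of the new line. By the residue theorem this pole contributes exactly $(-1)^k\mathop{{\tt Res}}\limits_{w=1-iy}\big(\kzeta(w+iy)\,x^w/w\big)$, the main term in the statement. \emph{Keeping the contour strictly to the right of $\sigma=\tfrac12$ is precisely what lets us avoid computing residues at the zeros of $\zeta$, whose multiplicities are unknown when $k\ge2$}; it is also the source of the extra power of $L$ compared with the case $k=1$.

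To bound the remaining integrals I would invoke the classical conditional estimate $\frac{\zeta'}{\zeta}(\sigma+it)\ll\frac{\log(2+|t|)}{\sigma-\tfrac12}+\log(2+|t|)$, valid under RH for $\tfrac12<\sigma\le1$: on the line $\mathrm{Re}\,w=\tfrac12+\tfrac1L$ and on the two horizontal segments this gives $\zeta'/\zeta\ll L^2$ (here $|\mathrm{Im}(w+iy)|\le T+|y|\ll(x+|y|)^2$, so $\log(2+|\mathrm{Im}(w+iy)|)\ll L$), and Cauchy's inequality on circles of radius $\asymp1/L$ then gives $(\zeta'/\zeta)^{(j)}\ll_k L^{j+2}$ for $0\le j<k$. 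Feeding these into the Fa\`a di Bruno identity
\[
\frac{\zeta^{(k)}}{\zeta}=Y_k\Big(\tfrac{\zeta'}{\zeta},\,\big(\tfrac{\zeta'}{\zeta}\big)',\,\dots,\,\big(\tfrac{\zeta'}{\zeta}\big)^{(k-1)}\Big)
\]
($Y_k$ the complete Bell polynomial; the identity follows from $(\log\zeta)'=\zeta'/\zeta$) yields $\frac{\zeta^{(k)}}{\zeta}(w+iy)\ll_k L^{2k}$ on the whole contour. Hence the two horizontal segments and the Perron tail are both dominated by $x^{1/2}L^{2k+1}$, while the vertical segment contributes
\[
\ll_k L^{2k}\,x^{1/2+1/L}\int_{\R}\frac{dv}{|\tfrac12+\tfrac1L+iv|}\ll_k L^{2k}\cdot x^{1/2}\cdot L=x^{1/2}L^{2k+1},
\]
using $x^{1/L}\le e$. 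Together with the previous two steps this proves the main estimate; the exponent $2k+1$ appears as $2k$ (the size of $\zeta^{(k)}/\zeta$ that near the critical line, reflecting the monomial $(\zeta'/\zeta)^k$ inside $Y_k$) plus $1$ (from $\int dv/|w|$ along the vertical line).

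Finally, expanding $\zeta^{(k)}(w+iy)/\zeta(w+iy)$ — which has a pole of order $k$ at $w=1-iy$ — in its Laurent series there and reading off the coefficient of $\big(w-(1-iy)\big)^{k-1}$ shows the residue term has size $\ll_k x(\log x)^{k-1}/(1+|y|)$, which when $|y|>\sqrt x$ is $\ll_k x^{1/2}(\log x)^{k-1}$ and hence absorbed into the error term, so it may then be dropped. When $k=1$ the pole at $s=1$ is simple and each zero of $\zeta$ gives a simple pole of the integrand, so one may instead push the line of integration past $\sigma=\tfrac12$ exactly as in~\cite{GGL}: the zeros then contribute $-\sum_\rho x^{\rho-iy}/(\rho-iy)$, and under RH $x^{1/2}\sum_{|\gamma|\le T}|\tfrac12+i(\gamma-y)|^{-1}\ll x^{1/2}L^2$ while the residue at $w=0$ (of size $\ll L$) is absorbed into the error, which replaces the exponent $3$ by $2$. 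The step I expect to demand the most care is the bound $\frac{\zeta^{(k)}}{\zeta}\ll_k L^{2k}$ on a line at distance merely $1/L$ from $\sigma=\tfrac12$ — that is, controlling $\zeta'/\zeta$ and its first $k-1$ derivatives that close to the critical line under RH — together with the verification that the overall loss beyond the naive bound $(\log(x+|y|))^{2k}$ is exactly one factor of $L$.
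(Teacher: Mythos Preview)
Your approach is correct and follows the same architecture as the paper's proof (which treats Theorems~\ref{thm:main-forward} and~\ref{thm:main2-forward} in parallel): truncated Perron, contour shift to a vertical line just to the right of $\sigma=\tfrac12$ for $k\ge2$ (and past it for $k=1$), a single residue at $w=1-iy$, and Lemma~\ref{lem:residual} to drop that residue when $|y|>\sqrt{x}$. The paper makes different parameter choices --- $T\asymp\sqrt{x}$ rather than $(x+|y|)^2$, and left edge at $\tfrac12+\tfrac1{\log x}$ rather than $\tfrac12+\tfrac1L$ --- but yours work equally well; one slip is that your displayed integral $\int_{\R}dv/|\tfrac12+\tfrac1L+iv|$ diverges and should read $\int_{-T}^T$, which is $\ll\log T\ll L$.

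The one substantive difference is how you obtain the key bound $\zeta^{(k)}/\zeta\ll L^{2k}$ near the critical line. The paper proves this as Proposition~\ref{prop:condl-bd-log-deriv-zeta}: it differentiates the partial-fraction expansion of $\zeta'/\zeta$ over the nontrivial zeros, bounds the resulting sums $\sum_\rho(s-\rho)^{-j}$ directly (Lemma~\ref{lem:summing-roots}), and then uses the recursion $z_m'=z_{m+1}-z_1z_m$ --- equivalent to your Bell-polynomial identity --- to pass to $\zeta^{(k)}/\zeta$. Your route (classical bound $\zeta'/\zeta\ll(\log\tau)/(\sigma-\tfrac12)$, Cauchy on disks of radius $\asymp1/L$ to get $(\zeta'/\zeta)^{(j)}\ll L^{j+2}$, then Fa\`a di Bruno) reaches the same estimate more quickly and avoids the explicit zero-sums, while the paper's route makes the dependence on the zero distribution explicit and yields the slightly sharper form $(\log(x\tau)\log\tau)^k$.
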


For example, Theorem~\ref{thm:main2-forward} (with $k\defeq 2$) asserts
that the conditional estimate
$$
\sum_{n\le x}\Lambda_2(n)n^{-iy}
=\frac{2x^{1-iy}(\log x-C_0)}{(1-iy)}
-\frac{2x^{1-iy}}{(1-iy)^2}
+O\big(x^{1/2}\{\log(x+|y|)\}^5\big)
$$
holds uniformly for all $x,y\in\R$, $x\ge 2$. In particular, under RH we have
$$
\sum_{n\le x}\Lambda_2(n)
=2x(\log x-C_0-1)+O\big(x^{1/2}(\log x)^5\big).
$$

\begin{theorem}\label{thm:main2-back}
Fix $k\in\N$, and suppose that for any $\eps>0$ the estimate
$$
\psi_k(x,y)=(-1)^k\mathop{{\tt Res}}\limits_{w=1-iy}
\bigg(\kzeta(w+iy)\frac{x^w}{w}\bigg)
+O\big(x^{1/2}(x+|y|)^\eps\big)
$$
holds uniformly for all $x,y\in\R$, $x\ge 2$,
where the implied constant depends only on $k$ and $\eps$.
Then the Riemann Hypothesis is true.
\end{theorem}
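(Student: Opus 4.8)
The plan is to adapt the ``second half'' of the Gonek--Graham--Lee argument for $\Lambda$; in our setting it simplifies, because the hypothesized estimate is assumed for \emph{all} $x\ge 2$ rather than only for $x$ in a range depending on $|y|$. Fix $y\in\R$ --- in fact $y=0$ already suffices, since the conclusion does not refer to $y$ --- and set
\[
M(x,y)\defeq(-1)^k\operatorname{Res}_{w=1-iy}\big(\kzeta(w+iy)\,x^w/w\big),\qquad E(x,y)\defeq\psi_k(x,y)-M(x,y),
\]
so the hypothesis reads $E(x,y)\ll_{k,\eps}x^{1/2}(x+|y|)^\eps$ for every $\eps>0$. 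I would start from the unconditional identity
\[
(-1)^k\kzeta(s+iy)=\sum_{n\ge 1}\frac{\Lambda_k(n)n^{-iy}}{n^s}=s\int_1^\infty\frac{\psi_k(x,y)}{x^{s+1}}\,dx\qquad(\sigma>1),
\]
valid since $\Lambda_k(n)\ll_k n^\eps$, and split the last integral according to $\psi_k=M+E$.

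For the main term, $M(x,y)$ is $x^{1-iy}$ times a polynomial in $\log x$ of degree $\le k-1$ (as $\kzeta$ has a pole of order exactly $k$ at $1$) whose coefficients are rational in $(1-iy)^{-1}$; evaluating $\int_0^\infty t^je^{-\alpha t}\,dt=j!\,\alpha^{-j-1}$ term by term shows that $s\int_1^\infty M(x,y)x^{-s-1}\,dx$, defined a priori for $\sigma>1$, continues to a rational function of $s$ whose only pole is at $s=1-iy$. For the error term, for each fixed $\eps>0$ we have $|E(x,y)|\ll_{k,\eps,y}x^{1/2+\eps}$, so $s\int_1^\infty E(x,y)x^{-s-1}\,dx$ converges absolutely and locally uniformly for $\sigma>\tfrac12+\eps$, hence (as $\eps$ is arbitrary) is holomorphic on $\sigma>\tfrac12$. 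Adding the two pieces, $\kzeta(s+iy)$ continues holomorphically to $\sigma>\tfrac12$ apart from the pole at $s=1-iy$; letting $y$ vary (or just taking $y=0$) we get that $\kzeta$ is holomorphic on $\{w:\operatorname{Re}(w)>\tfrac12\}\setminus\{1\}$, and keeping track of the error one also obtains the Lindel\"of-type bound $\kzeta(\sigma+it)\ll_{k,\delta}|t|^{\delta}$ uniformly for $\sigma\ge\tfrac12+\delta$, $|t|\ge 1$.

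It remains to deduce that $\zeta(w)\ne 0$ for $\operatorname{Re}(w)>\tfrac12$. If $\zeta$ had a zero $\rho$ of multiplicity $m$ with $\operatorname{Re}(\rho)>\tfrac12$, then holomorphy of $\kzeta$ at $\rho$ forces $\operatorname{ord}_{\rho}(\zeta^{(k)})\ge m$; but when $m\ge k$ one has $\operatorname{ord}_{\rho}(\zeta^{(k)})=m-k<m$, a contradiction, so $\zeta$ has no zero of multiplicity $\ge k$ in the region --- which for $k=1$ already proves the Riemann Hypothesis (and recovers the von~Koch-type converse). The delicate remaining point, and the step I expect to be the main obstacle for $k\ge 2$, is to exclude a hypothetical zero $\rho$ of low multiplicity $m\le k-1$ with $\operatorname{Re}(\rho)>\tfrac12$ at which $\zeta^{(k)},\dots,\zeta^{(k+m-1)}$ all vanish simultaneously: for such a $\rho$ the quotient $\kzeta$ is holomorphic, so the bare analytic-continuation argument produces no contradiction. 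I would try to rule out this degenerate configuration by playing the bound $\kzeta(\sigma+it)\ll_{k,\delta}|t|^{\delta}$ against the argument-principle / Levinson--Montgomery description of the zeros of $\zeta^{(k)}$ in the critical strip; alternatively, one reverts to a Perron/contour-shift version of the argument, in which any zero with $\operatorname{Re}(\rho)>\tfrac12$ at which $\kzeta$ has a genuine pole contributes a term of size $\gg x^{\operatorname{Re}(\rho)}$ to $\psi_k(x,y)$, contradicting the $O(x^{1/2+\eps})$ error, and then disposes of the coincidental-vanishing case by a separate local analysis at $\rho$.
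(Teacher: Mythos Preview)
Your approach is more elementary than the paper's and is complete for $k=1$: Abel summation shows that the hypothesis forces $\zeta^{(k)}/\zeta$ to extend holomorphically to $\{\operatorname{Re} w>\tfrac12\}\setminus\{1\}$, and for $k=1$ that is precisely RH. For $k\ge 2$, however, the gap you flag is genuine and neither of your proposed fixes closes it. Holomorphy of $\zeta^{(k)}/\zeta$ on $\operatorname{Re} w>\tfrac12$ does \emph{not} exclude a hypothetical zero $\rho_0=\beta_0+i\gamma_0$ of $\zeta$ with $\beta_0>\tfrac12$ and multiplicity $m<k$ at which the Taylor coefficients $c_k,\dots,c_{k+m-1}$ of $\zeta$ all vanish; nothing known rules this out a priori. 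Your first fix (Lindel\"of-type growth of $\zeta^{(k)}/\zeta$ combined with zero-distribution of $\zeta^{(k)}$) addresses global counting, not this local coincidence, and does not produce a contradiction; your second is only a gesture toward a different method, not an argument.

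The paper circumvents the problem with an extra device that has no analogue in your scheme. After forming $H(s)=\int_1^\infty R(x,y)x^{-s}\,dx$ and changing variables to $K(\hat s)=H(\hat s+1-iy)=(g(\hat s)-C)/(\hat s-iy)$, one fixes the putative off-line zero $\rho_0$ of multiplicity $m$, sets $k_\star=-\operatorname{ord}_{\rho_0}K$ (an integer that may well be $\le 0$ in exactly the degenerate case you worry about; the paper's text tacitly assumes $k_\star\ge 1$, but the construction works verbatim for any $k_\star\in\Z$), and introduces the auxiliary weight
\[
\kappa(\hat s)=\frac{(\hat s-1)^k\,\zeta(\hat s)^k}{(\hat s-\rho_0)^{\,mk-k_\star+1}\,(\hat s+2)^{4k}}.
\]
The exponent $mk-k_\star+1$ is tuned so that $\kappa K$ has a \emph{forced} simple pole at $\rho_0$ and is holomorphic elsewhere in $\hat\sigma>0$: the factor $\zeta^k$ kills the poles of $K$ at every other zeta zero, while at $\rho_0$ the denominator over-divides by exactly one unit regardless of whether $K$ itself is singular there. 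One then computes $\tfrac{1}{2\pi i}\int h(s)H(s)e^{s\log x}\,ds$ (with $h(s)=\kappa(s-1+iy)$) two ways: shifting the contour past $\rho_0$ gives $c\,x^{\rho_0-iy+1}+O(x^{5/4})$ with $c\ne 0$, while inserting $H(s)=\int_1^\infty R(z,y)z^{-s}\,dz$ and using the hypothesis $R\ll z^{1/2+\eps}$ gives $O(x^{3/2+\eps})$. This yields $x^{\beta_0+1}\ll x^{3/2+\eps}$, contradicting $\beta_0>\tfrac12$. The weight $\kappa$ is precisely what detects the zero $\rho_0$ even when $\zeta^{(k)}/\zeta$ is accidentally regular there; your analytic-continuation argument has no substitute for this step.
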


We expect that similar results can be achieved for a much
wider class of arithmetic functions.

\section{Some auxiliary results}

In this section, we present a series of technical lemmas that are used below in 
the proofs of our theorems.

\begin{lemma}\label{lem:vonM-bounds}
For all $k,n\in\N$ we have
$$
0\le\Lambda^k(n)\le\Lambda_k(n)\le(\log n)^k.
$$
\end{lemma}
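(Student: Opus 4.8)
The plan is to establish the three inequalities from left to right. The leftmost bound $\Lambda^k(n)\ge 0$ is immediate: $\Lambda$ is a nonnegative arithmetic function, hence so is any convolution power $\Lambda\star\cdots\star\Lambda$. For the rightmost bound $\Lambda_k(n)\le(\log n)^k$, I would argue directly from the definition \eqref{eq:gen-vonM-defn}. Writing $n=p_1^{a_1}\cdots p_r^{a_r}$, one has $\Lambda_k=\mu\star L^k$, and the key observation is that for $k\ge 1$ the function $L^k$ vanishes at $n=1$, so the Möbius sum telescopes in a controlled way. In fact I expect the cleanest route is to show that $\Lambda_k(n)$ is nonnegative and bounded above by $(\log n)^k = (L^k)(n)$ by exploiting the recursion $\Lambda_{k} $ relates to $\Lambda_{k-1}$ via $\Lambda_k = \Lambda_{k-1}\star\Lambda + (L\cdot\text{something})$, or more simply by noting $0\le \Lambda_k(n) = \sum_{d\mid n}\mu(d)(\log(n/d))^k \le (\log n)^k\sum_{d\mid n}|\mu(d)|\cdot[\text{indicator reasoning}]$ — but that crude bound gives $2^{\omega(n)}(\log n)^k$, which is too weak, so a genuine cancellation argument is needed.

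The heart of the matter is therefore the middle inequality $\Lambda^k(n)\le\Lambda_k(n)$ together with the sharp form of the upper bound, and the natural unifying tool is the identity of Dirichlet series. From \eqref{eq:identity-one} and \eqref{eq:identity-two} we have, for $\sigma>1$,
$$
\sum_{n=1}^\infty\frac{\Lambda^k(n)}{n^s}=(-1)^k\Big(\frac{\zeta'}{\zeta}(s)\Big)^k,
\qquad
\sum_{n=1}^\infty\frac{\Lambda_k(n)}{n^s}=(-1)^k\Big(\frac{\zeta^{(k)}}{\zeta}(s)\Big),
$$
and the classical identity $(-1)^k(\zeta^{(k)}/\zeta) = \big((-1)(\zeta'/\zeta)\big)^{\star k}\!/(k-1)!\,$ is \emph{not} quite right — rather, the correct combinatorial fact is that $(-1)^k\zeta^{(k)}/\zeta$ is a sum over set partitions of products of terms $(-1)^j(\zeta^{(j)}/\zeta)$, which in Dirichlet-series language says $\Lambda_k = \sum$ (Faà di Bruno / partition sum) of convolutions of the $\Lambda_j$'s with nonnegative coefficients, all of which are nonnegative arithmetic functions. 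Since one of the terms in that partition sum is exactly $\Lambda^k$ (the partition into singletons, using $\Lambda_1=\Lambda$) and every other term is a nonnegative function, we get $\Lambda^k(n)\le\Lambda_k(n)$ for all $n$. For the top bound, I would instead use the even simpler identity $\zeta^{(k)}/\zeta = (\log^k \star\, \mu)$-Dirichlet-series paired with $\zeta(s)\cdot\big(\sum \Lambda_k(n)n^{-s}\big) = \sum (\mathbf 1\star\Lambda_k)(n)n^{-s}$, whence $\mathbf 1\star\Lambda_k = (-1)^k\zeta^{(k)}\cdot$(coefficientwise) $= L^k$ up to sign; i.e. $\sum_{d\mid n}\Lambda_k(d) = (\log n)^k$. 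Since $\Lambda_k\ge 0$ (which follows from the partition-sum representation above), this gives $\Lambda_k(n)\le\sum_{d\mid n}\Lambda_k(d)=(\log n)^k$ at once.

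So the key steps, in order, are: (i) record $\Lambda^k\ge 0$; (ii) prove the convolution identity $\sum_{d\mid n}\Lambda_k(d)=(\log n)^k$, which is just unpacking $L^k=\mathbf 1\star\Lambda_k$ from $\Lambda_k=\mu\star L^k$; (iii) establish the partition-sum (Faà di Bruno) representation of $(-1)^k\zeta^{(k)}/\zeta$ as a nonnegative combination of convolution products of the $\Lambda_j$, yielding both $\Lambda_k\ge 0$ and $\Lambda^k\le\Lambda_k$; (iv) combine (ii) and $\Lambda_k\ge 0$ to get $\Lambda_k(n)\le(\log n)^k$. The main obstacle I anticipate is step (iii): getting the combinatorial identity for $\zeta^{(k)}/\zeta$ with \emph{manifestly nonnegative} coefficients, and cleanly identifying the singleton-partition term with $\Lambda^k$. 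An alternative that sidesteps the partition bookkeeping is an induction on $k$ using the recursion $\Lambda_{k+1}=L\cdot\Lambda_k + \Lambda_k\star\Lambda$ (equivalently $\Lambda_{k+1}(n)=\Lambda_k(n)\log n + (\Lambda_k\star\Lambda)(n)$, which comes from differentiating $\zeta^{(k)}/\zeta$); this makes nonnegativity and the bound $\Lambda^{k}\le\Lambda_k\le L^k$ fall out termwise, and I would likely present that cleaner inductive route in the final write-up.
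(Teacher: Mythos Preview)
Your preferred inductive route via the recursion $\Lambda_{k+1}=L\Lambda_k+\Lambda\star\Lambda_k$ is essentially the paper's approach: the paper invokes exactly this identity (citing \cite[(1.44)]{IwanKow}) but packages the argument as a one-line telescoping sum,
\[
\Lambda^{\ell-1}\star\Lambda_{k-\ell+1}-\Lambda^\ell\star\Lambda_{k-\ell}
=\Lambda^{\ell-1}\star(L\Lambda_{k-\ell})\ge 0
\qquad(\ell=1,\dots,k),
\]
rather than as an induction. For the upper bound the paper simply cites $\Lambda_k\le L^k$ from \cite[(1.45)]{IwanKow}; your argument via $\mathbf 1\star\Lambda_k=L^k$ together with $\Lambda_k\ge 0$ is a clean, self-contained proof of that cited fact and is worth keeping. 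Your Fa\`a di Bruno alternative is a genuinely different route that the paper does not take; it trades the recursion for a partition identity and gives $\Lambda_k\ge 0$ and $\Lambda^k\le\Lambda_k$ in one stroke.

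Two small corrections. First, your closing claim that the recursion makes ``$\Lambda^k\le\Lambda_k\le L^k$ fall out termwise'' overstates matters for the rightmost inequality: from $\Lambda_{k+1}=L\Lambda_k+\Lambda\star\Lambda_k$ and the inductive hypothesis you only get $\Lambda_{k+1}(n)\le(\log n)^{k+1}+(\Lambda\star\Lambda_k)(n)$, and the extra term is nonnegative, so the sharp bound does \emph{not} follow directly---you still need your steps (ii) and (iv). Second, in your Fa\`a di Bruno description the building blocks should be the derivatives of $\log\zeta$, whose Dirichlet coefficients are $\Lambda(n)(\log n)^{j-1}$, rather than the functions $(-1)^j\zeta^{(j)}/\zeta$ with coefficients $\Lambda_j$; the conclusion survives because those are also nonnegative and the all-singletons partition still contributes exactly $(-\zeta'/\zeta)^k$, but the identity as you wrote it is not correct.
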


\begin{proof}
Let $L$ be the natural logarithm function, and define
$\Lambda^0$ and $\Lambda_0$ to be the indicator function of the number one.
Since $\Lambda$ takes only nonnegative values, it follows that
$\Lambda^k\ge 0$, and the upper bound $\Lambda_k\le L^k$ is well known;
see, for example, \cite[(1.45)]{IwanKow}.
It remains to show that $\Lambda^k\le\Lambda_k$.
Using the identity $\Lambda_{m+1}=L\Lambda_m+\Lambda\star\Lambda_m$
(see \cite[(1.44)]{IwanKow}) we have
$$
\Lambda^{\ell-1}\star\Lambda_{k-\ell+1}-\Lambda^\ell\star\Lambda_{k-\ell}
=\Lambda^{\ell-1}\star(L\Lambda_{k-\ell})\ge 0.
$$
Summing this bound with $\ell=1,\ldots,k$, we see that
$\Lambda_k\ge\Lambda^k$.
\end{proof}

\begin{lemma}\label{lem:saloni}
For all $m\in\N$, $y\in\R$, and $s\in\C$ with $\sigma>2$, we have
$$
\int_1^\infty\mathop{{\tt Res}}\limits_{w=1}
\bigg(\frac{1}{(w-1)^m}
\frac{x^{w-iy}}{w-iy}\bigg)x^{-s}\,dx
=\frac{1}{s-1}\(\frac{1}{(s-2+iy)^m}-\frac{1}{(iy-1)^m}\).
$$
\end{lemma}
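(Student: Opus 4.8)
The plan is to turn the residue into a small contour integral around $w=1$, interchange it with the $x$-integration, and reduce everything to a residue computation for a rational function. Fix $s$ with $\sigma>2$ and choose a radius $r$ with $0<r<\min(1,\sigma-2)$. Writing $g_x(w)=\dfrac{1}{(w-1)^m}\cdot\dfrac{x^{w-iy}}{w-iy}$, the only singularity of $g_x$ inside the closed disc $\{|w-1|\le r\}$ is the pole of order $m$ at $w=1$: indeed $x^{w-iy}$ is entire, while the pole at $w=iy$ satisfies $|iy-1|=\sqrt{1+y^2}\ge 1>r$. Hence
\[
\mathop{{\tt Res}}\limits_{w=1}g_x(w)=\frac{1}{2\pi i}\oint_{|w-1|=r}g_x(w)\,dw .
\]
On the circle $|w-1|=r$ one has $|x^{w-iy-s}|=x^{\Re(w)-\sigma}\le x^{1+r-\sigma}$ with $1+r-\sigma<-1$, and the factor $\dfrac{1}{(w-1)^m(w-iy)}$ is bounded there; so the resulting double integral over $w$ and $x$ is absolutely convergent and Fubini's theorem justifies the interchange.

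After the interchange, the inner integral is elementary: for $|w-1|=r$ we have $\Re(w)\le 1+r<\sigma-1$, so $\int_1^\infty x^{w-iy-s}\,dx=\dfrac{1}{s-1+iy-w}$, and therefore
\[
\int_1^\infty\mathop{{\tt Res}}\limits_{w=1}g_x(w)\,x^{-s}\,dx
=\frac{1}{2\pi i}\oint_{|w-1|=r}\frac{dw}{(w-1)^m(w-iy)(s-1+iy-w)} .
\]
The integrand is now a rational function of $w$ whose only pole inside $\{|w-1|\le r\}$ is again at $w=1$, since the poles at $w=iy$ and at $w=s-1+iy$ lie at distance $\ge 1$ and $\ge\sigma-2$ from $w=1$, respectively, both exceeding $r$. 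Thus the last integral equals $\mathop{{\tt Res}}\limits_{w=1}\dfrac{1}{(w-1)^m(w-iy)(s-1+iy-w)}$.

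To evaluate this residue I would use the partial-fraction identity
\[
\frac{1}{(w-iy)(s-1+iy-w)}=\frac{1}{s-1}\left(\frac{1}{w-iy}-\frac{1}{w-(s-1+iy)}\right)
\]
together with the elementary fact that, for any constant $c\ne 1$, expanding $\frac{1}{w-c}$ in powers of $w-1$ gives $\mathop{{\tt Res}}\limits_{w=1}\dfrac{1}{(w-1)^m(w-c)}=-\dfrac{1}{(c-1)^m}$. Applying this with $c=iy$ and with $c=s-1+iy$ produces exactly $\dfrac{1}{s-1}\!\left(\dfrac{1}{(s-2+iy)^m}-\dfrac{1}{(iy-1)^m}\right)$, as claimed. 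The only point needing genuine care is the justification of the interchange of the two integrals and the bookkeeping of which poles fall inside the circle $|w-1|=r$; the hypothesis $\sigma>2$ is used precisely to leave room for a choice of $r$ that makes the $x$-integral convergent while keeping the disc free of the extraneous poles at $w=iy$ and $w=s-1+iy$.
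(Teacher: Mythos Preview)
Your argument is correct. The Fubini justification, the location of the extraneous poles, the partial-fraction decomposition, and the elementary residue $\mathop{{\tt Res}}_{w=1}\frac{1}{(w-1)^m(w-c)}=-\frac{1}{(c-1)^m}$ all check out, and the final expression is exactly what is claimed.

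Your route is genuinely different from the paper's. The paper computes the residue \emph{first}, writing out
\[
\mathop{{\tt Res}}\limits_{w=1}\bigg(\frac{1}{(w-1)^m}\frac{x^{w-iy}}{w-iy}\bigg)
=-\sum_{j=0}^{m-1}\frac{x^{1-iy}(\log x)^{m-1-j}}{(m-1-j)!\,(iy-1)^{j+1}}
\]
via the Leibniz rule, then integrates each term in $x$ using $\int_1^\infty x^{1-iy}(\log x)^{m-1-j}x^{-s}\,dx=(m-1-j)!/(s-2+iy)^{m-j}$, and finally collapses the resulting sum by telescoping. You instead keep the residue as a small contour integral, swap the order of integration, and let a partial-fraction identity do the work. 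Your approach is cleaner and avoids the bookkeeping of the Leibniz expansion and the telescoping. The trade-off is that the paper's intermediate formula displayed above is itself reused later (in bounding the residual main term in Lemma~\ref{lem:residual}), so the more explicit computation is not wasted effort in context; your method would require that formula to be derived separately if needed.
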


\begin{proof}
We compute
\begin{align}
\nonumber
&\mathop{{\tt Res}}\limits_{w=1}
\bigg(\frac{1}{(w-1)^m}
\frac{x^{w-iy}}{w-iy}\bigg)
=\frac{1}{(m-1)!}\lim_{w\to 1}\frac{d^{m-1}}{dw^{m-1}}
\bigg(\frac{x^{w-iy}}{w-iy}\bigg)\\
\nonumber
&\qquad\qquad=\frac{1}{(m-1)!}\lim_{w\to 1}
\sum_{j=0}^{m-1}{m-1\choose j}
x^{w-iy}(\log x)^{m-j-1}\frac{(-1)^jj!}{(w-iy)^{j+1}}\\
\label{eq:residue-reln}
&\qquad\qquad=-\sum_{j=0}^{m-1}
\frac{x^{1-iy}(\log x)^{m-1-j}}{(m-1-j)!(iy-1)^{j+1}}.
\end{align}
Using the identity
$$
\int_1^\infty x^{1-iy}(\log x)^{m-1-j}\cdot x^{-s}\,dx
=\frac{(m-j-1)!}{(s-2+iy)^{m-j}},
$$
we get that
\dalign{
&\int_1^\infty\mathop{{\tt Res}}\limits_{w=1}
\bigg(\frac{1}{(w-1)^m}
\frac{x^{w-iy}}{w-iy}\bigg)x^{-s}\,dx
=-\sum_{j=0}^{m-1}\frac{1}{(iy-1)^{j+1}(s-2+iy)^{m-j}}\\
&\qquad=\frac{1}{s-1}
\sum_{j=0}^{m-1}\bigg(\frac{1}{(iy-1)^j(s-2+iy)^{m-j}}
-\frac{1}{(iy-1)^{j+1}(s-2+iy)^{m-j-1}}\bigg).
}
Evaluating the telescoping sum, the lemma follows.
\end{proof}		

\begin{lemma}\label{lem:residual}
For all $k\in\N$ and $x,y\in\R$ with $x\ge 1$, we have
$$
\mathop{{\tt Res}}\limits_{w=1-iy}
\bigg(\Big\{\frac{\zeta'}{\zeta}(w+iy)\Big\}^k\frac{x^w}{w}\bigg)
\ll\frac{x(\log x)^{k-1}}{|y|+1}
$$
and
$$
\mathop{{\tt Res}}\limits_{w=1-iy}
\bigg(\kzeta(w+iy)\frac{x^w}{w}\bigg)
\ll\frac{x(\log x)^{k-1}}{|y|+1},
$$
where the implied constants depend only on $k$.
\end{lemma}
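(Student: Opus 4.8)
The plan is to make the residue explicit using a Laurent-expansion argument and then bound the result term by term. The starting point is that near $w=1-iy$, the function $\{\tfrac{\zeta'}{\zeta}(w+iy)\}^k$ has a pole of order exactly $k$, coming from the simple pole of $\zeta'/\zeta$ at $s=1$ with residue $-1$. Writing $u\defeq w+iy-1$, so that $w=1-iy$ corresponds to $u=0$, we have $\tfrac{\zeta'}{\zeta}(1+u)=-\tfrac1u+g(u)$ for a function $g$ holomorphic near $u=0$ (with $g(0)=-C_0$, though the precise value is irrelevant here). Hence $\{\tfrac{\zeta'}{\zeta}(w+iy)\}^k=(-1)^k u^{-k}(1-ug(u))^k$, and the bracketed expression whose residue we want becomes
\[
\frac{(-1)^k}{u^k}\,(1-ug(u))^k\,\frac{x^{w}}{w}
=\frac{(-1)^k}{u^k}\,(1-ug(u))^k\,\frac{x^{1-iy}x^{u}}{1-iy+u}.
\]
The residue at $u=0$ is $\tfrac{1}{(k-1)!}$ times the $(k-1)$-st derivative at $u=0$ of $(1-ug(u))^k x^{1-iy}x^u/(1-iy+u)$.

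Next I would expand each of the three analytic factors $(1-ug(u))^k$, $x^u=\exp(u\log x)$, and $(1-iy+u)^{-1}$ in powers of $u$ and collect the coefficient of $u^{k-1}$. The factor $x^u$ contributes the powers of $\log x$: the coefficient of $u^j$ in $x^u$ is $(\log x)^j/j!$, so the highest power of $\log x$ arising in the coefficient of $u^{k-1}$ is $(\log x)^{k-1}$, which matches the claimed bound; all other terms carry strictly lower powers of $\log x$ and are therefore absorbed. The factor $(1-iy+u)^{-1}$ and its $u$-derivatives at $u=0$ are $(-1)^\ell \ell!\,(1-iy)^{-\ell-1}$, each of which is $\ll (|y|+1)^{-1}$ in absolute value since $|1-iy|\ge\max(1,|y|)\gg |y|+1$; this produces the denominator $|y|+1$. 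The factor $x^{1-iy}$ has modulus $x$. Multiplying these together and using that the Taylor coefficients of $(1-ug(u))^k$ near $u=0$ are absolute constants depending only on $k$ (since $g$ is a fixed holomorphic function), we obtain the bound $\ll x(\log x)^{k-1}/(|y|+1)$ with an implied constant depending only on $k$.

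The second estimate is handled identically: by \eqref{eq:identity-two} the relevant function is $\kzeta(w+iy)$, i.e. $\tfrac{\zeta^{(k)}}{\zeta}(w+iy)$ in the paper's notation, which again has a pole of order exactly $k$ at $w=1-iy$; indeed near $u=0$ one has $\tfrac{\zeta^{(k)}}{\zeta}(1+u)=(-1)^k k!\,u^{-k}(1+O(u))$, so the same Laurent-expansion and term-collection argument applies verbatim, with the combinatorial constants again depending only on $k$.

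I expect the only mild subtlety — rather than a genuine obstacle — to be bookkeeping: one must verify that every monomial in the expansion of the $(k-1)$-st derivative genuinely carries a factor of $x^{1-iy}$ times a derivative of $(1-iy+u)^{-1}$, hence is $\ll x/(|y|+1)$, and that the accompanying power of $\log x$ never exceeds $k-1$. Both facts are immediate from the product rule once the three factors are separated as above, so no delicate estimation of $\zeta'/\zeta$ away from its pole is needed here; the analytic input is entirely local to a neighbourhood of the pole. For the edge case $x=1$ the bound is trivial since $(\log x)^{k-1}$ is interpreted as $0^{k-1}$, which is $0$ for $k\ge 2$ and $1$ for $k=1$, and the residue is correspondingly $O(1/(|y|+1))$, consistent with the statement.
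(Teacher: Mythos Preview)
Your proposal is correct and takes essentially the same approach as the paper: both compute the residue by Laurent-expanding locally at the pole and then bounding term by term, using that $|x^{1-iy}|=x$, that each $u$-derivative of $(1-iy+u)^{-1}$ at $u=0$ is $\ll(|y|+1)^{-1}$, and that the power of $\log x$ appearing never exceeds $k-1$. The only organizational difference is that the paper first isolates the principal part $\sum_{m=1}^{k}a_m/(s-1)^m$ and then invokes the explicit residue formula \eqref{eq:residue-reln} already established in Lemma~\ref{lem:saloni}, whereas you carry out the equivalent $(k-1)$-st-derivative computation directly via the product rule --- the same calculation in different packaging.
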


\begin{proof}
Using the Laurent series development of $\{\zeta'(s)/\zeta(s)\}^k$
at $s=1$, we write
$$
\Big\{\frac{\zeta'}{\zeta}(s)\Big\}^k=f(s)+g(s),\qquad
f(s)\defeq\sum_{m=1}^k\frac{a_m}{(s-1)^m},
$$
where $g$ is analytic in a neighborhood of one.
Therefore,
\dalign{
\mathop{{\tt Res}}\limits_{w=1-iy}
\bigg(\Big\{\frac{\zeta'}{\zeta}(w+iy)\Big\}^k\frac{x^w}{w}\bigg)
&=\mathop{{\tt Res}}\limits_{w=1}
\bigg(\Big\{\frac{\zeta'}{\zeta}(w)\Big\}^k\frac{x^{w-iy}}{w-iy}\bigg)\\
&=\sum_{m=1}^k a_m\mathop{{\tt Res}}\limits_{w=1}
\bigg(\frac{1}{(w-1)^m}\frac{x^{w-iy}}{w-iy}\bigg).}
For each $m=1,\ldots,k$, we have by \eqref{eq:residue-reln}:
$$
\mathop{{\tt Res}}\limits_{w=1}
\bigg(\frac{1}{(w-1)^m}
\frac{x^{w-iy}}{w-iy}\bigg)
\ll\frac{x(\log x)^{k-1}}{|y|+1},
$$
and so we obtain the first bound of the lemma. The proof of the second
bound is similar.
\end{proof}
	
\section{Bounds on $\zeta^{(k)}/\zeta$ and $(\zeta'/\zeta)^k$} 

In this section, any implied constants in the symbols $O$ and $\ll$
may depend (where obvious) on the parameters $k$, $\ell$, and $\delta$, 
but are absolute otherwise. For any complex number $s=\sigma+it$,
we denote $\tau\defeq|t|+2$.

\begin{lemma}\label{lem:log-deriv-bds-uncondl}
For any real number $T_1\ge 2$, there exists $T\in[T_1,T_1+1]$ such that
\be\label{eq:LD-horiz}
\frac{\zeta'}{\zeta}(\sigma+iT)\ll(\log T)^2\qquad(-1\le\sigma\le 2).
\ee
We also have
\be\label{eq:LD-vert}
\frac{\zeta'}{\zeta}(-1+it)\ll\log\tau\qquad(t\in\R).
\ee
\end{lemma}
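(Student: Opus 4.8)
The plan is to prove Lemma~\ref{lem:log-deriv-bds-uncondl} using standard unconditional techniques from the theory of the Riemann zeta function, the two halves being essentially independent.

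For the bound \eqref{eq:LD-vert}, I would exploit the symmetry provided by the functional equation. Writing $\xi(s)=\tfrac12 s(s-1)\pi^{-s/2}\Gamma(s/2)\zeta(s)$ and using $\xi(s)=\xi(1-s)$, one can take logarithmic derivatives to express $\zeta'(s)/\zeta(s)$ at $s=-1+it$ in terms of $\zeta'/\zeta$ at the point $2-it$, which lies to the right of the line $\sigma=2$, together with the logarithmic derivatives of the elementary factors $s$, $s-1$, and $\Gamma(s/2)$. On the line $\sigma=2$ the Dirichlet series for $\zeta'/\zeta$ converges absolutely and is $O(1)$; the rational factors contribute $O(1)$ for $|t|$ bounded away from $0$ and $1$ and are in any case $O(1)$ uniformly since the pole at $s=1$ is avoided on $\sigma=-1$; and Stirling's formula gives $\Gamma'/\Gamma(s/2)\ll\log\tau$ on the relevant vertical line. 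Collecting these gives $\zeta'/\zeta(-1+it)\ll\log\tau$ as claimed. (For the small values $|t|\le 2$ one uses instead that $-1+it$ stays a bounded distance from the trivial zero at $s=0$ and from the pole at $s=1$, so the bound holds trivially there.)

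For \eqref{eq:LD-horiz}, I would use the classical Borel--Carath\'eodory/Hadamard three-circles machinery, or equivalently the partial-fraction representation
\[
\frac{\zeta'}{\zeta}(s)=\sum_{|t-\gamma|\le 1}\frac{1}{s-\rho}+O(\log\tau),
\]
valid for $-1\le\sigma\le 2$, where $\rho=\beta+i\gamma$ runs over nontrivial zeros. The number of zeros with $|t-\gamma|\le 1$ is $O(\log\tau)$. The point is then to choose the height $T$ so that no zero $\gamma$ is too close to $T$: since $\sum_{\gamma}$ of the zeros with $|\gamma-T_1|\le 1$ is $O(\log T_1)$, a pigeonhole argument over the interval $[T_1,T_1+1]$ produces a value $T$ with $|T-\gamma|\gg 1/\log T$ for every zero $\rho$. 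With this choice each term $1/(s-\rho)$ in the sum is $O(\log T)$, and there are $O(\log T)$ terms, so the sum is $O((\log T)^2)$, which dominates the $O(\log\tau)=O(\log T)$ error term; this is \eqref{eq:LD-horiz}.

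The main obstacle is the horizontal bound \eqref{eq:LD-horiz}, and specifically the need to combine the partial-fraction formula with the zero-avoidance argument in a way that gives the clean uniform exponent $2$ on $\log T$ over the full strip $-1\le\sigma\le 2$; one must be a little careful near the pole at $s=1$ (handled since $T_1\ge 2$ keeps us away from it) and must verify that the $O(\log\tau)$ remainder in the partial-fraction expansion is genuinely uniform in $\sigma$ on this strip. The vertical bound \eqref{eq:LD-vert} is comparatively routine given the functional equation and Stirling's formula. I would present \eqref{eq:LD-vert} first as a warm-up and then do \eqref{eq:LD-horiz}; both are standard enough that the write-up can cite Titchmarsh or Montgomery--Vaughan for the underlying estimates and focus on the pigeonhole selection of $T$.
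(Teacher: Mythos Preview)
Your approach is correct and is precisely the standard argument the paper relies on: the paper's own proof is simply a citation to \cite[Lemma~12.2]{MontVau} and \cite[Lemma~12.4]{MontVau}, whose proofs proceed exactly as you describe (functional equation plus Stirling for \eqref{eq:LD-vert}, and the local partial-fraction formula plus pigeonhole avoidance of zeros for \eqref{eq:LD-horiz}). One small slip: there is no ``trivial zero at $s=0$'' (indeed $\zeta(0)=-\tfrac12$); for $|t|\le 2$ you simply use that $\zeta'/\zeta$ is continuous and bounded on the compact segment $\{-1+it:|t|\le 2\}$, which contains no zero or pole of $\zeta$.
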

\begin{proof}
See \cite[Lemma~12.2]{MontVau} and \cite[Lemma~12.4]{MontVau}.
\end{proof}

In addition to Lemma~\ref{lem:log-deriv-bds-uncondl}
we use a \emph{conditional} bound on the logarithmic derivative of $\zeta(s)$;
see Proposition~\ref{prop:condl-bd-log-deriv-zeta} below. We achieve this via
the following technical lemmas.

\begin{lemma}\label{lem:summing-roots}
Assume RH. For any $x\ge 2$, the bounds
\be\label{eq:sum-over-roots-1}
\sum_\rho\bigg(\frac{1}{s-\rho}+\frac{1}{\rho}\bigg)
\ll\log (x\tau)\log\tau
\ee
and
\be\label{eq:sum-over-roots-k}
\sum_\rho\frac{1}{(s-\rho)^k}
\ll(\log x)^k\log\tau\qquad(k\ge 2)
\ee
hold uniformly throughout the region
$$
\cR_x\defeq\big\{s\in\C:
\tfrac12+\tfrac{1}{\log x}\le\sigma\le 2,~|s-1|>\tfrac{1}{100}\big\},
$$
where the sums in \eqref{eq:sum-over-roots-1}
and \eqref{eq:sum-over-roots-k} run over the zeros
of $\zeta(s)$ in the critical strip.
\end{lemma}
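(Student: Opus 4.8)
The plan is to start from the classical Hadamard-factorization identity for $\zeta'(s)/\zeta(s)$, namely
\[
\frac{\zeta'}{\zeta}(s)=B-\frac{1}{s-1}-\frac12\frac{\Gamma'}{\Gamma}\Big(\frac s2+1\Big)+\sum_\rho\Big(\frac{1}{s-\rho}+\frac1\rho\Big),
\]
together with the standard trick of taking the difference of this identity at $s$ and at a nearby reference point. I would first record the well-known consequence of RH (via Stirling for the $\Gamma$-factor and the classical zero-counting estimate $N(T+1)-N(T)\ll\log T$) that, for $s=\sigma+it$ with $-1\le\sigma\le 2$,
\[
\frac{\zeta'}{\zeta}(s)=\sum_{|\,t-\gamma\,|\le 1}\frac{1}{s-\rho}+O(\log\tau),
\]
where $\rho=\tfrac12+i\gamma$ runs over the nontrivial zeros; this is essentially \cite[Lemma~12.1 / eq.~(12.?)]{MontVau} under RH. The point of this reduction is that only $O(\log\tau)$ zeros are relevant, each contributing a term of size at most $1/(\sigma-\tfrac12)\le\log x$ on $\cR_x$, which already gives the crude bound $(\log x)(\log\tau)$; the sharper bounds in the lemma come from being more careful about how many zeros lie close to $s$.

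For \eqref{eq:sum-over-roots-k} with $k\ge 2$ this is immediate: on $\cR_x$ we have $|s-\rho|\ge\sigma-\tfrac12\ge 1/\log x$ for every zero, so
\[
\sum_\rho\frac{1}{|s-\rho|^k}\le(\log x)^{k-2}\sum_{|t-\gamma|\le 1}\frac{1}{|s-\rho|^2}+\sum_{|t-\gamma|>1}\frac{1}{|s-\rho|^k}\ll(\log x)^{k-2}\cdot(\log x)\cdot(\log\tau)+\log\tau,
\]
where for the near zeros I bound $\#\{\gamma:|t-\gamma|\le 1\}\ll\log\tau$ and each term by $(\log x)^2$ after extracting $(\log x)^{k-2}$, wait — more cleanly: bound $k-2$ of the factors $1/|s-\rho|$ by $\log x$ and keep the remaining $1/|s-\rho|^2$, then sum $\sum_{\gamma}1/((\sigma-\tfrac12)^2+(t-\gamma)^2)\ll(\log x)\log\tau$ by splitting dyadically in $|t-\gamma|$ and using $N(T+H)-N(T)\ll H\log\tau + \log\tau$. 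The tail $|t-\gamma|>1$ contributes $O(\log\tau)$ since $\sum_{|t-\gamma|>1}|t-\gamma|^{-k}$ converges against the zero-density. This yields \eqref{eq:sum-over-roots-k}.

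For \eqref{eq:sum-over-roots-1} the logarithm $\log(x\tau)$ rather than $\log x$ appears because one cannot drop the $+1/\rho$ terms: I would apply the Hadamard identity at $s$ and subtract it at the point $2+it$ (same imaginary part, real part $2$), so that the $\Gamma'/\Gamma$ and $1/\rho$ pieces largely cancel and one is left with $\sum_\rho\big(\frac{1}{s-\rho}+\frac1\rho\big)=\frac{\zeta'}{\zeta}(s)-\frac{\zeta'}{\zeta}(2+it)+\sum_\rho\big(\frac{1}{s-\rho}-\frac{1}{2+it-\rho}\big)+O(\log\tau)$; the difference $\frac{1}{s-\rho}-\frac{1}{2+it-\rho}=\frac{2-\sigma}{(s-\rho)(2+it-\rho)}$ is $O(1/((t-\gamma)^2+1))$ for far zeros and $O(\log x)$ for the $O(\log\tau)$ near zeros, giving $O((\log x)\log\tau)$; since $\log x\le\log(x\tau)$ this is absorbed, and $\frac{\zeta'}{\zeta}(2+it)=O(1)$. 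The main obstacle — and the only place real care is needed — is the bookkeeping for the near-zero sum: getting the clean factor $\log x$ (not $(\log x)^2$) out of $\sum_{|t-\gamma|\le 1}1/|s-\rho|$ requires the dyadic decomposition in $|t-\gamma|$ combined with $N(T+H)-N(T)\ll (H+1)\log\tau$, so that the sum is $\ll\sum_{0\le j\le\log\log x}2^{-j}\cdot\log x\cdot\log\tau$ truncated appropriately — and one must check uniformity down to the very edge $\sigma=\tfrac12+1/\log x$ of $\cR_x$, where the condition $|s-1|>\tfrac1{100}$ is what keeps the $1/(s-1)$ pole term harmless.
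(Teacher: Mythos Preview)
Your treatment of the $k\ge 2$ case is essentially the paper's: split into near zeros ($|t-\gamma|\le 1$ in your language, $|n|\le 10$ in the paper's indexing by unit shells $\cZ_n$) and far zeros, bound each near term crudely by $(\log x)^k$ using $|s-\rho|\ge 1/\log x$, count $O(\log\tau)$ near zeros, and sum the far tail against the zero density. The dyadic refinement you sketch is unnecessary and in fact does not deliver the improvement you claim: the bound $N(T+H)-N(T)\ll (H+1)\log\tau$ gives no saving for $H<1$, so $\sum_{|t-\gamma|\le 1}|s-\rho|^{-2}$ cannot be shown to be $\ll(\log x)\log\tau$ this way. Fortunately the crude bound $(\log x)^k\log\tau$ already suffices and is exactly what the paper does.

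For $k=1$ your route diverges from the paper's. The paper never introduces an auxiliary reference point; instead it combines the two terms directly as
\[
\frac{1}{s-\rho}+\frac{1}{\rho}=\frac{s}{(s-\rho)\rho}
\]
and bounds the sum over far shells by $\sum_{|n|>10}\tau\log\tau_n/(|n|\,\tau_n)\ll(\log\tau)^2$, with the near shells giving $(\log x)\log\tau$ as before. Your idea of comparing with the point $2+it$ is a perfectly standard alternative, but your displayed identity is garbled: as written it contains both $\zeta'/\zeta(s)$ and the difference sum $\sum_\rho\big(\tfrac{1}{s-\rho}-\tfrac{1}{2+it-\rho}\big)$, and you never bound the former (nor could you---that bound is precisely what this lemma is used to prove in Proposition~\ref{prop:condl-bd-log-deriv-zeta}, so invoking it here would be circular). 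What you actually want is the elementary decomposition
\[
\sum_\rho\Big(\frac{1}{s-\rho}+\frac{1}{\rho}\Big)
=\sum_\rho\Big(\frac{1}{2+it-\rho}+\frac{1}{\rho}\Big)
+\sum_\rho\Big(\frac{1}{s-\rho}-\frac{1}{2+it-\rho}\Big),
\]
where the first sum is $O(\log\tau)$ by the Hadamard identity applied at $2+it$ (since $\zeta'/\zeta(2+it)=O(1)$ and the $\Gamma$-term is $O(\log\tau)$), and the second is bounded termwise by $\tfrac{|2-\sigma|}{|s-\rho|\,|2+it-\rho|}$ exactly as you say. With that correction your argument goes through; again no dyadic subdivision is needed for the near zeros, since each of the $O(\log\tau)$ terms is already $O(\log x)$.
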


\begin{proof}
For every integer $n$, let $\cZ_n$ be the multiset consisting of
zeros $\rho=\frac12+i\gamma$ of the zeta function that satisfy the
equivalent conditions
$$
n\le t-\gamma\le n+1
\qquad\Longleftrightarrow\qquad
t-n-1\le\gamma\le t-n,
$$
where each zero $\rho$ in $\cZ_n$ appears a number of times that is
equal to its multiplicity. Note that
\be\label{eq:s-minus-rho}
|s-\rho|=\big|\sigma-\tfrac12+i(t-\gamma)\big|
\ge\max\{\tfrac{1}{\log x},|n|-1\}\qquad(s\in\cR_x,~\rho\in\cZ_n).
\ee
Since $\zeta(s)$ has no zeros in the critical strip with
$\gamma\in[-14,14]$, it follows that $\cZ_n=\varnothing$
unless $|t-n|>10$ (say). Thus, denoting
$$
\tau_n\defeq|t-n|+2,
$$
we have
\be\label{eq:just-rho}
|\rho|=\big|\tfrac12+i\gamma\big|\asymp|\gamma|\asymp\tau_n\qquad(\rho\in\cZ_n).
\ee
and (cf.~\cite[Thm.\ 10.13]{MontVau})
\be\label{eq:bonnie}
|\cZ_n|\ll\log\tau_n.
\ee

Let $s\in\cR_x$ be fixed in what follows, and define
$$
S_{k,n}\defeq\sum_{\rho\in\cZ_n}f_k(\rho)\qquad(k\in\N,~n\in\Z),
$$
where
$$
f_1(\rho)\defeq\bigg|\frac{1}{s-\rho}+\frac{1}{\rho}\bigg|
\mand
f_k(\rho)\defeq\bigg|\frac{1}{(s-\rho)^k}\bigg|\qquad(k\ge 2).
$$
and observe that
$$
\sum_\rho\bigg(\frac{1}{s-\rho}+\frac{1}{\rho}\bigg)
\ll\ssum{n\in\Z}S_{1,n}
\mand
\sum_\rho\frac{1}{(s-\rho)^k}\ll\ssum{n\in\Z}S_{k,n}
\qquad(k\ge 2).
$$

If $|n|\le 10$, then by \eqref{eq:s-minus-rho} and \eqref{eq:just-rho}:
$$
f_1(\rho)\le\frac{1}{|s-\rho|}+\frac{1}{|\rho|}
\ll \log x \qquad(\rho\in\cZ_n),
$$
and 
$$
f_k(\rho)=\frac{1}{|s-\rho|^k}\ll(\log x)^k\qquad(k\ge 2,~\rho\in\cZ_n),
$$
Using \eqref{eq:bonnie} we get that
\be\label{eq:small-n}
\sum_{|n|\le 10}S_{k,n}\ll(\log x)^k\sum_{|n|\le 10}\log\tau_n
\ll(\log x)^k\log\tau\qquad(k\in\N).
\ee

Next, suppose that $|n|>10$. For $k\ge 2$, using
\eqref{eq:s-minus-rho} and \eqref{eq:bonnie} we have
\dalign{
f_k(\rho)=\frac{1}{|s-\rho|^k}\ll\frac{1}{|n|^k}
&\qquad\Longrightarrow\qquad
S_{k,n}\ll\sum_{\rho\in\cZ_n}\frac{1}{|n|^k}\ll\frac{\log\tau_n}{|n|^k}\\
&\qquad\Longrightarrow\qquad
\sum_{|n|>10}S_{k,n}
\ll\log\tau.
}
Combined with \eqref{eq:small-n}, this completes the proof
of \eqref{eq:sum-over-roots-k}. When $k=1$, we have by
\eqref{eq:s-minus-rho}, \eqref{eq:just-rho}, and \eqref{eq:bonnie}:
$$
f_1(\rho)=\bigg|\frac{s}{(s-\rho)\rho}\bigg|
\ll\frac{\tau}{|n|\tau_n}
\qquad\Longrightarrow\qquad
S_{1,n}\ll \sum_{\rho\in\cZ_n}\frac{\tau}{|n|\tau_n}
\ll\frac{\tau\log\tau_n}{|n|\tau_n}.
$$
Recalling the definition of $\tau_n$, we see that 
$$
\tau_n\asymp\begin{cases}
\tau&\quad\hbox{if $10<|n|\le\tfrac12|t|$},\\
|t-n|+2&\quad\hbox{if $\tfrac12|t|<|n|\le 2|t|$},\\
|n|&\quad\hbox{if $|n|>2|t|$}.\\
\end{cases}
$$
In addition to this, $\tau_n\ll\tau$ in the second range above,
and $|n|\gg\tau$ in the last two ranges. Putting everything
together, we have
\dalign{
\sum_{|n|>10}S_{1,n}
&\ll\sum_{10<|n|\le\frac12|t|}\frac{\log\tau}{|n|}
+\sum_{\frac12|t|<|n|\le 2|t|}\frac{\log\tau}{|t-n|+2}
+\sum_{|n|>2|t|}\frac{\tau\log|n|}{|n|^2}
\ll(\log\tau)^2.
}
Combined with \eqref{eq:small-n}, this completes the proof
of \eqref{eq:sum-over-roots-1}.
\end{proof}

\bigskip

Let $f$ be a twice-differentiable function on $(0,\infty)$ such that
\begin{itemize}
\item[$(i)$] $f(u)\to 0$ and $f'(u)\to 0$ as $u\to\infty$;
\item[$(ii)$] $f''$ is integrable on $(1,\infty)$.
\end{itemize}
The relation
\be\label{eq:RSint}
\sum_{n=1}^\infty f'(n)=-f(1)+f'(1)+\int_1^\infty f''(u)\{u\}\,du,
\ee
which is immediate using Riemann-Stieltjes integration, plays
an important role in the proof of the following lemma.

\begin{lemma}\label{lem:gamma-deriv-estimates}
Let $\delta>0$ be fixed. The estimates
\be\label{eq:log-gamma-1}
\frac{\Gamma'}{\Gamma}(s)=\log s+O(\tau^{-1})
\ee
and
\be\label{eq:log-gamma-ell}
\frac{d^\ell}{ds^\ell}\frac{\Gamma'}{\Gamma}(s)=
(-1)^{\ell-1}(\ell-1)!\,s^{-\ell}+O(\tau^{-\ell-1})\qquad(\ell\ge 1)
\ee
hold uniformly throughout the half-plane $\{\sigma\ge\delta\}$.
\end{lemma}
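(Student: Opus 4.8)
The plan is to obtain an exact integral representation for $\Gamma'/\Gamma$ and then to read off both estimates from it. I would begin from the classical partial-fraction expansion
\[
\frac{\Gamma'}{\Gamma}(s)=-C_0+\sum_{n=1}^\infty\Big(\frac1n-\frac1{n+s-1}\Big),
\]
where $C_0$ is Euler's constant, and apply the summation formula \eqref{eq:RSint} to the function $f(u)\defeq\log\dfrac{u}{u+s-1}$. Assuming for the moment that $\sigma\ge1$, this $f$ satisfies all the hypotheses of \eqref{eq:RSint}: it is twice differentiable on $(0,\infty)$ (there $u+s-1$ has no zero), $f(u)\to0$ and $f'(u)=\dfrac{s-1}{u(u+s-1)}\to0$ as $u\to\infty$, and $f''(u)=(u+s-1)^{-2}-u^{-2}$ is integrable on $(1,\infty)$. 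Since $f'(n)=\frac1n-\frac1{n+s-1}$, and since $-f(1)=\log s$ and $f'(1)=1-\frac1s$, formula \eqref{eq:RSint} gives, after using the standard evaluation $\int_1^\infty\{u\}u^{-2}\,du=1-C_0$ to cancel the constant term,
\[
\frac{\Gamma'}{\Gamma}(s)=\log s-\frac1s+\int_1^\infty\frac{\{u\}}{(u+s-1)^2}\,du .
\]
The integral on the right converges locally uniformly on the half-plane $\{\sigma>0\}$ and so defines a holomorphic function there; hence this identity, established for $\sigma\ge1$, persists throughout $\{\sigma>0\}$ by analytic continuation.

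Next I would record two elementary uniform bounds valid for $\sigma\ge\delta$ (we may assume $\delta\le1$). First, $|s|\gg\tau$: this is clear from $|s|\ge|t|\ge\tau-2$ when $|t|\ge2$, and from $|s|\ge\sigma\ge\delta$ when $|t|<2$. Second, $|u+s-1|\gg u+\tau$ uniformly for $u\ge1$: starting from $2|u+s-1|\ge(u-1+\sigma)+|t|\ge(u-1+\delta)+|t|$ and comparing with $u+\tau=(u-1)+|t|+3$, the monotonicity in $r\defeq(u-1)+|t|$ of the quotient $(r+\delta)/(r+3)$ yields $|u+s-1|\ge\tfrac\delta6(u+\tau)$. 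The first bound gives $1/s=O(\tau^{-1})$, and the second gives
\[
\int_1^\infty\frac{\{u\}}{|u+s-1|^2}\,du\ll\int_1^\infty\frac{du}{(u+\tau)^2}\ll\tau^{-1};
\]
inserting both into the integral representation yields \eqref{eq:log-gamma-1}.

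For \eqref{eq:log-gamma-ell} I would differentiate the integral representation $\ell$ times; differentiation under the integral sign is legitimate because the integral and each of its formal $s$-derivatives converges locally uniformly on $\{\sigma>0\}$. This produces
\[
\frac{d^\ell}{ds^\ell}\frac{\Gamma'}{\Gamma}(s)=(-1)^{\ell-1}(\ell-1)!\,s^{-\ell}+(-1)^{\ell-1}\ell!\,s^{-\ell-1}+(-1)^\ell(\ell+1)!\int_1^\infty\frac{\{u\}}{(u+s-1)^{\ell+2}}\,du .
\]
The first term is the claimed main term; the second is $O(|s|^{-\ell-1})=O(\tau^{-\ell-1})$, and by the bound on $|u+s-1|$ the remaining integral is $\ll\int_1^\infty(u+\tau)^{-\ell-2}\,du\ll\tau^{-\ell-1}$, which is exactly \eqref{eq:log-gamma-ell}.

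I do not expect a genuine obstacle here; the argument is essentially routine. The one point calling for a little care is the temporary restriction to $\sigma\ge1$ before invoking \eqref{eq:RSint}: for $s\in(0,1)$ the function $f$ acquires a logarithmic singularity at $u=1-s$, so one really does want to establish the representation first where $u+s-1$ is bounded away from $0$ on all of $(0,\infty)$, and then continue analytically to $\{\sigma>0\}$ (alternatively, one can simply note that \eqref{eq:RSint} involves $f$ only on $[1,\infty)$, where $f$ is smooth for every $\sigma>0$). The inequality $|u+s-1|\gg u+\tau$ on $\{\sigma\ge\delta\}$ is likewise a bookkeeping nuisance rather than a real difficulty.
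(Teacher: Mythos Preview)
Your proof is correct and follows essentially the same route as the paper's: both start from the partial-fraction expansion of $\Gamma'/\Gamma$ and apply the summation formula \eqref{eq:RSint} with a logarithmic $f$, the only differences being that the paper takes $f(u)=\log\tfrac{u}{u+s}$ rather than your shifted $f(u)=\log\tfrac{u}{u+s-1}$, and for \eqref{eq:log-gamma-ell} the paper re-applies \eqref{eq:RSint} to $f(u)=-\tfrac{1}{\ell(u+s)^\ell}$ instead of differentiating the integral representation directly as you do. These are cosmetic variations of the same argument.
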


\begin{proof}
The gamma function $\Gamma$ can be defined as the reciprocal of the Weierstrauss
product (see, e.g., \cite[Chap.~XII]{WW}):
$$
\Gamma(s)\defeq s^{-1}\er^{-\gamma s}
\prod_{n=1}^\infty\bigg\{\Big(1+\frac{s}{n}\Big)^{-1}\er^{s/n}\bigg\},
$$
which implies that
\be\label{eq:log-deriv-gamma}
\frac{\Gamma'}{\Gamma}(s)=-\frac{1}{s}-\gamma
+s\sum_{n=1}^\infty\frac{1}{n(n+s)}.
\ee
Using \eqref{eq:RSint} with
$$
f(u)\defeq\log\Big(\frac{u}{u+s}\Big),
$$
we have
\dalign{
s\sum_{n=1}^\infty\frac{1}{n(n+s)}
&=\log(s+1)+1-\frac{1}{s+1}-
\int_1^\infty\{u\}\bigg(\frac{1}{u^2}-\frac{1}{(u+s)^2}\bigg)\,du\\
&=\log s+\gamma+O(\tau^{-1}).
}
Combining this with \eqref{eq:log-deriv-gamma}, the first statement of
the lemma is proved.

For any positive integer $\ell$, from \eqref{eq:log-deriv-gamma} we deduce that
\be\label{eq:diff-log-deriv-gamma}
\frac{d^\ell}{ds^\ell}\frac{\Gamma'}{\Gamma}(s)
=(-1)^{\ell-1}\ell!\bigg\{\frac{1}{s^{\ell+1}}
+\sum_{n=1}^\infty\frac{1}{(n+s)^{\ell+1}}\bigg\}.
\ee
Applying \eqref{eq:RSint} with
$$
f(u)\defeq-\frac{1}{\ell(u+s)^\ell},
$$
we have
\dalign{
\sum_{n=1}^\infty\frac{1}{(n+s)^{\ell+1}}
&=\frac{1}{\ell(s+1)^\ell}+\frac{1}{(s+1)^{\ell+1}}
-(\ell+1)\int_1^\infty \frac{\{u\}\,du}{(u+s)^{\ell+2}}\\
&=\frac{1}{\ell(s+1)^\ell}+O(\tau^{-\ell-1}).
}
Combining this with \eqref{eq:diff-log-deriv-gamma}, we obtain
the second statement of the lemma.
\end{proof}

\begin{proposition}\label{prop:condl-bd-log-deriv-zeta}
Assume RH. For any $k\in\N$ and $x\ge 2$, the bounds
\be\label{eq:aloe}
\Big\{\frac{\zeta'}{\zeta}(s)\Big\}^k\ll\big(\log(x\tau)\log\tau\big)^k
\ee
and
\be\label{eq:vera}
\kzeta(s)\ll\big(\log(x\tau)\log\tau\big)^k
\ee
hold uniformly throughout the region
$$
\cR_x\defeq\big\{s\in\C:
\tfrac12+\tfrac{1}{\log x}\le\sigma\le 2,~|s-1|>\tfrac{1}{100}\big\}.
$$
\end{proposition}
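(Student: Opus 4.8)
The plan is to deduce both bounds from the classical partial-fraction (Hadamard) expansion of the logarithmic derivative of the zeta function, together with Lemmas~\ref{lem:summing-roots} and~\ref{lem:gamma-deriv-estimates}. Note that RH enters in two ways: it guarantees $\zeta(s)\ne0$ throughout $\cR_x$, so that the left-hand sides are finite and the partial-fraction identity is a genuine equality there, and it is also the hypothesis under which Lemma~\ref{lem:summing-roots} is established. Throughout, fix $s=\sigma+it\in\cR_x$ and write $\tau=|t|+2$ as in the statement. Since the right-hand side of \eqref{eq:aloe} is exactly the $k$-th power of $\log(x\tau)\log\tau$, it suffices to prove \eqref{eq:aloe} for $k=1$, i.e.\ to show $\zeta'(s)/\zeta(s)\ll\log(x\tau)\log\tau$ on $\cR_x$, and then raise to the $k$-th power.

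For the case $k=1$ I would start from the standard identity
$$
\frac{\zeta'}{\zeta}(s)=B+\tfrac12\log\pi-\frac1{s-1}-\tfrac12\frac{\Gamma'}{\Gamma}\Big(\tfrac s2+1\Big)+\sum_\rho\Big(\frac1{s-\rho}+\frac1\rho\Big),
$$
obtained by logarithmically differentiating the Hadamard product for the completed zeta function $\xi(s)=\tfrac12 s(s-1)\pi^{-s/2}\Gamma(s/2)\zeta(s)$; here $B$ is an absolute constant and $\rho$ runs over the nontrivial zeros. On $\cR_x$ the quantities $B$, $\tfrac12\log\pi$ and $1/(s-1)$ are all $O(1)$ (the last because $|s-1|>\tfrac1{100}$); by \eqref{eq:log-gamma-1}, applied with $\delta=1$ since $s/2+1$ has real part at least $1$, we get $\frac{\Gamma'}{\Gamma}(s/2+1)=\log(s/2+1)+O(1)\ll\log\tau$; and the sum over zeros is $\ll\log(x\tau)\log\tau$ by \eqref{eq:sum-over-roots-1}. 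Adding these bounds yields $\zeta'(s)/\zeta(s)\ll\log(x\tau)\log\tau$, and hence \eqref{eq:aloe} after taking $k$-th powers.

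The bound \eqref{eq:vera} takes slightly more work. First I would differentiate the identity above $j$ times, for each fixed $j\ge1$: the derivatives of $B$ and $\tfrac12\log\pi$ vanish, the $j$-th derivatives of $1/(s-1)$ and of $\frac{\Gamma'}{\Gamma}(s/2+1)$ are $O(1)$ on $\cR_x$ (for the latter, use \eqref{eq:log-gamma-ell}), and $\sum_\rho(s-\rho)^{-(j+1)}\ll(\log x)^{j+1}\log\tau$ by \eqref{eq:sum-over-roots-k}, which applies precisely because $j+1\ge2$. Thus $\frac{d^j}{ds^j}\frac{\zeta'}{\zeta}(s)\ll(\log(x\tau))^{j+1}\log\tau$ for every $j\ge0$, the case $j=0$ being the bound just obtained. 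Next, writing $P_m\defeq\zeta^{(m)}/\zeta$, the recursion $P_0=1$, $P_{m+1}=P_m'+\frac{\zeta'}{\zeta}\,P_m$ shows, by a routine induction on $m$, that $P_k$ is an integer polynomial in $\frac{\zeta'}{\zeta},(\frac{\zeta'}{\zeta})',\dots,(\frac{\zeta'}{\zeta})^{(k-1)}$ in which every monomial has total weight exactly $k$, where $(\frac{\zeta'}{\zeta})^{(j)}$ is assigned weight $j+1$. Estimating a monomial $\prod_i(\frac{\zeta'}{\zeta})^{(j_i)}$ with $\sum_i(j_i+1)=k$ by the bounds above gives at most $\prod_i(\log(x\tau))^{j_i+1}\log\tau=(\log(x\tau))^k(\log\tau)^r$, where $r\le k$ is the number of factors; since $\tau\ge2$ one checks easily that $(\log\tau)^r\ll_k(\log\tau)^k$, so each monomial --- and hence the finite sum $P_k=\zeta^{(k)}/\zeta$ --- is $\ll_k(\log(x\tau)\log\tau)^k$. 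This is \eqref{eq:vera}.

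I expect the only real obstacle to be bookkeeping in this last step: verifying that the weight-$k$ structure of the polynomial expressing $\zeta^{(k)}/\zeta$ in terms of the derivatives $(\zeta'/\zeta)^{(j)}$ is correct, so that the individual estimates multiply up to exactly the power $(\log(x\tau)\log\tau)^k$ and no more. All of the genuinely analytic content --- and, in particular, the input of RH, which forces the zeros $\rho$ onto the line $\sigma=\tfrac12$ and so keeps them at distance $\ge 1/\log x$ from $\cR_x$ --- has already been packaged into Lemmas~\ref{lem:summing-roots} and~\ref{lem:gamma-deriv-estimates}, so the remaining steps are elementary.
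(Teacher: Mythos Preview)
Your proposal is correct and follows essentially the same route as the paper: both start from the Hadamard partial-fraction identity for $\zeta'/\zeta$, invoke Lemmas~\ref{lem:summing-roots} and~\ref{lem:gamma-deriv-estimates} to handle the sum over zeros and the $\Gamma'/\Gamma$ term, differentiate to bound the higher derivatives, and then use a weight-$k$ polynomial structure to pass to $\zeta^{(k)}/\zeta$. The only organizational difference is that the paper sets $z_j\defeq\zeta^{(j)}/\zeta$ and inducts on the bound $z_j\ll(\log(x\tau)\log\tau)^j$ via the relation $z_m'=z_{m+1}-z_1z_m$, whereas you express $\zeta^{(k)}/\zeta$ directly as a weight-$k$ polynomial in the variables $(\zeta'/\zeta)^{(j)}$ and bound each monomial; these are two equivalent unwindings of the same recursion.
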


\begin{proof}
It suffices to prove \eqref{eq:vera}, since \eqref{eq:aloe} follows from
case $k=1$ of \eqref{eq:vera}.

Let $z_j\defeq\zeta^{(j)}/\zeta$ for each $j\in\N$.
Our goal is to show that
\be\label{eq:zeta-j-over-zeta}
z_j\ll\big(\log(x\tau)\log\tau\big)^j\qquad(s\in\cR_x)
\ee
holds for all $j$, where the implied constant depends only on $j$.

For $j=1$ we have (cf.~\cite[(10.29)]{MontVau})
\be\label{eq:bear}
z_1(s)=B+\frac12\log\pi-\frac{1}{s-1}
-\frac12\frac{\Gamma'}{\Gamma}(s/2+1)+
\sum_\rho\bigg(\frac{1}{s-\rho}+\frac{1}{\rho}\bigg),
\ee 
where $B$ is a constant, and
the sum runs over the zeros of $\zeta(s)$ in the critical strip.
The case $j=1$ of \eqref{eq:zeta-j-over-zeta} follows from \eqref{eq:bear}
and Lemmas~\ref{lem:summing-roots} and~\ref{lem:gamma-deriv-estimates}.

Now suppose \eqref{eq:zeta-j-over-zeta} holds for all positive integers $j<k$, where $k\ge 2$.
On the one hand, differentiating both
sides of \eqref{eq:bear} precisely $k-1$ times, we have
$$
z_1^{(k-1)}(s)=\frac{(-1)^k(k-1)!}{(s-1)^k}
-\frac{1}{2^k}\frac{d^{k-1}}{ds^{k-1}}\frac{\Gamma'}{\Gamma}(s/2+1)+
\sum_\rho\frac{(-1)^{k-1}(k-1)!}{(s-\rho)^k}.
$$
Using Lemmas~\ref{lem:summing-roots} and~\ref{lem:gamma-deriv-estimates}
again, this implies that the bound
\be\label{eq:picnic}
z_1^{(k-1)}(s)\ll (\log x)^k\log\tau
\ee
holds uniformly in $\cR_x$. On the other hand, by the quotient rule, one has
the simple relation
$$
z'_m=z_{m+1}-z_1z_m\qquad(m\in\N).
$$
Using an inductive argument, one can show that 
$$
z_m^{(n)}=z_{m+n}+P_n(z_1,\ldots,z_{m+n-1})\qquad(m,n\in\N),
$$
where each $P_n$ is a polynomial in $\Z[X_1,\ldots,X_{m+n-1}]$ whose monomials
all have the form $\prod_jX_j^{n_j}$ for some integers $n_j\ge 0$
that satisfy $\sum_j jn_j=m+n$. In particular, we have
$$
z_1^{(k-1)}-z_k=P_{k-1}(z_1,\ldots,z_{k-1}),
$$
and each monomial term occurring in the polynomial on the right side satisfies
the uniform bound
$$
\prod_{j=1}^{k-1}z_j(s)^{n_j}\ll\prod_{j=1}^{k-1}\big(\log(x\tau)\log\tau\big)^{jn_j}
=\big(\log(x\tau)\log\tau\big)^k\qquad(s\in\cR_x).
$$
In other words,
$$
z_1^{(k-1)}(s)-z_k(s)\ll\big(\log(x\tau)\log\tau\big)^k\qquad(s\in\cR_x).
$$
Combining this bound with \eqref{eq:picnic}, it follows that \eqref{eq:zeta-j-over-zeta}
holds when $j=k$. This completes the induction, and the proposition is proved.
\end{proof}

\section{Proof of Theorems~\ref{thm:main-forward} and~\ref{thm:main2-forward}}

Both theorems are proved in parallel.

Let the integer $k\in\N$ be fixed throughout.
Below, any implied constants in the symbols $O$ and $\ll$
may depend (where obvious) on $k$ but are independent of other parameters.

For the proof of Theorem~\ref{thm:main-forward}, we set
$$
a_n(y)\defeq \Lambda^k(n)n^{-iy}
\mand
\alpha(y,s)\defeq\sum_{n=1}^\infty\frac{a_n(y)}{n^s}
=(-1)^k\Big\{\frac{\zeta'}{\zeta}(s+iy)\Big\}^k.
$$
Note that
$$
\sum_{n\le x}a_n(y)=\psi^k(x,y).
$$
On the other hand, for the proof of Theorem~\ref{thm:main2-forward}, we put
$$
a_n(y)\defeq \Lambda_k(n)n^{-iy}
\mand
\alpha(y,s)\defeq\sum_{n=1}^\infty\frac{a_n(y)}{n^s}
=(-1)^k\,\kzeta(s+iy),
$$
and we have
$$
\sum_{n\le x}a_n(y)=\psi_k(x,y).
$$
In either case, our aim is to show that
$$
\sum_{n\le x}a_n(y)=\mathop{{\tt Res}}\limits_{w=1-iy}
\Big(\alpha(y,w)\frac{x^w}{w}\Big)
+O\big(x^{1/2}\{\log(x+|y|)\}^{2k+1}\big),
$$
where the exponent $2k+1$ can be replaced by $2$ in the case that $k=1$.
Note that Lemma~\ref{lem:residual} guarantees that
the residual term can be omitted when $|y|>\sqrt{x}$.

Let $x,y\in\R$ with $x\ge 2$. Let
$$
\sigma_0\defeq 1+1/\log x
\mand
T\in\big[\sqrt{x}+10,\sqrt{x}+11\big],
$$
where $T$ is chosen so that \eqref{eq:LD-vert} holds.
Adjusting $T$ \emph{slightly}, we can assume that
$T$ is not the ordinate of any zero of the zeta function.
By Perron's formula (see 
\cite[Thm.~5.2 and Cor.~5.3]{MontVau}) we have
\be\label{eq:aoc}
\sum_{n\le x}a_n(y)
=\frac{1}{2\pi i}\int_{\sigma_0-iT}^{\sigma_0+iT}
\alpha(y,s)\,\frac{x^s}{s}\,ds+O(E_1+E_2+E_3),
\ee
where the error terms are given by
$$
E_1\defeq\sum_{x/2<n<2x}a_n(0)\min\Big\{1,\frac{x}{T|x-n|}\Big\},
\qquad
E_2\defeq\frac{x}{T}\sum_{n=1}^\infty\frac{a_n(0)}{n^{\sigma_0}},
$$
and
$$
E_3\defeq\begin{cases}
a_x(0)&\quad\hbox{if $x\in\N$,}\\
0&\quad\hbox{otherwise}.
\end{cases}
$$
Since $a_n(0)=\Lambda^k(n)$ or $\Lambda_k(n)$,
Lemma~\ref{lem:vonM-bounds} shows that
\be\label{eq:terms13}
E_1\ll\frac{x(\log x)^{k+1}}{T}\le x^{1/2}(\log x)^{k+1}
\mand
E_3\le (\log x)^k.
\ee
Moreover, as
$$
\bigg|\sum_{n=1}^\infty\frac{a_n(0)}{n^{\sigma_0}}\bigg|
=\big|\alpha(0,\sigma_0)\big|=
\bigg|\frac{\zeta'}{\zeta}(\sigma_0)\bigg|^k
\quad\text{or}\quad
\bigg|\kzeta(\sigma_0)\bigg|,
$$
using the Laurent series development of $(\zeta'(s)/\zeta(s))^k$ or 
$\zeta^{(k)}(s)/\zeta(s)$
at $s=1$, we derive the bound
\be\label{eq:term2}
E_2\ll\frac{x(\log x)^k}{T}\ll x^{1/2}(\log x)^k.
\ee
Combining \eqref{eq:terms13} and \eqref{eq:term2} with \eqref{eq:aoc},
we derive the estimate
\be\label{eq:rush}
\sum_{n\le x}a_n(y)
=\frac{1}{2\pi i}\int_{\sigma_0-iT}^{\sigma_0+iT}
\alpha(y,s)\,\frac{x^s}{s}\,ds
+O\big(x^{1/2}(\log x)^{k+1}\big).
\ee

Next, we shift the line of integration of the integral in \eqref{eq:rush}.
For this, we study the cases $k=1$ and $k\ge 2$, separately.

\bigskip\noindent{\sc Case 1 $(k=1)$.}
In this case, for both Theorems~\ref{thm:main-forward} and~\ref{thm:main2-forward}
we have
$$
\alpha(y,s)=-\frac{\zeta'}{\zeta}(s+iy).
$$
Let $\sC$ be the rectangular contour in $\C$ that connects
$$
\sigma_0-iT
~~\longrightarrow~~\sigma_0+iT
~~\longrightarrow~~ -1+iT
~~\longrightarrow~~ -1-iT
~~\longrightarrow~~ \sigma_0-iT.
$$
Along the horizontal segment $s=\sigma+iT$ with
$\sigma\in(-1,\sigma_0)$, by \eqref{eq:LD-horiz} we see that
$$
\alpha(y,s)\ll\euL^2,
\qquad\text{where}\quad
\euL\defeq\log(x+|y|);
$$
consequently,
\be\label{eq:horiz-up}
\int_{-1+iT}^{\sigma_0+iT}
\alpha(y,s)\,\frac{x^s}{s}\,ds
\ll \frac{\euL^2}{T}\int_{-1}^{\sigma_0}x^\sigma\,d\sigma
<\frac{x^{\sigma_0}\euL^2}{T\log x}
\ll x^{1/2}\euL^2.
\ee

Similarly,
\be\label{eq:horiz-down}
\int_{-1-iT}^{\sigma_0-iT}
\alpha(y,s)\,\frac{x^s}{s}\,ds
\ll x^{1/2}\euL^2.
\ee
On the other hand, using \eqref{eq:LD-vert} we have $\alpha(y,s)\ll\euL$
along the vertical segment $s=-1+it$ with $|t|<T$; thus,
\begin{equation}
\label{eq:vert-left}
\int_{-1-iT}^{-1+iT}
\alpha(y,s)\,\frac{x^s}{s}\,ds
\ll x^{-1}\euL\int_{-T}^T\frac{dt}{|t|+1}
\ll x^{-1}\euL^2.
\end{equation}
Combining \eqref{eq:horiz-up}, \eqref{eq:horiz-down}, and
\eqref{eq:vert-left} with our previous estimate \eqref{eq:rush},
we have
\be\label{eq:rush2}
\sum_{n\le x}a_n(y)
=\frac{1}{2\pi i}
\oint_{\sC}\alpha(y,s)\,
\frac{x^s}{s}\,ds+O(x^{1/2}\euL^{2}).
\ee
Since $k=1$, the poles of the integrand inside the contour are all simple.
If $|y|<T$, there is a pole at $s=1-iy$ with residue
$$
-\mathop{{\tt Res}}\limits_{w=1-iy}
\bigg(\frac{\zeta'}{\zeta}(w+iy)\frac{x^w}{w}\bigg)=\frac{x^{1-iy}}{1-iy}.
$$
The pole at $s=0$ contributes has residue $-\zeta'(iy)/\zeta(iy)\ll\euL$,
which can be thrown into the error term
(for the bound, see \cite[Lemma~12.1]{MontVau}). The remaining poles occur at
points $s=\rho-iy$ for which $\rho=\beta+i\gamma$ is a zero of the zeta
function with $|\gamma-y|<T$. Arguing as in the proof of
Lemma~\ref{lem:summing-roots}, for each integer $n\in[0,T]$
there are at most $O(\euL)$ zeros
$\rho=\frac12+i\gamma$ of $\zeta(s)$ such that $n\le|\gamma-y|\le n+1$;
hence the sum of the residues from such zeros is
$$
-\ssum{\rho=\beta+i\gamma\\|\gamma-y|<T}\frac{x^{\rho-iy}}{\rho-iy}
\ll x^{1/2}\sum_{0\le n\le T}\ssum{n\le|\gamma-y|\le n+1}\frac{1}{n+1}
\ll x^{1/2}\euL\ssum{0\le n\le T}\frac{1}{n+1}
\ll x^{1/2}\euL^2.
$$
Putting everything together, both theorems are proved for $k=1$.

\bigskip\noindent{\sc Case 2 $(k\ge 2)$.}
In this case, let $\sC$ be the contour
$$
\sigma_0-iT
~~\longrightarrow~~\sigma_0+iT
~~\longrightarrow~~ \tfrac12+\tfrac{1}{\log x}+iT
~~\longrightarrow~~ \tfrac12+\tfrac{1}{\log x}-iT
~~\longrightarrow~~ \sigma_0-iT.
$$
Since every number $s+iy=\sigma+i(T+y)$ with
$\sigma\in(\frac12+\tfrac{1}{\log x},\sigma_0)$ is contained
in the region $\cR_x$ defined in Proposition~\ref{prop:condl-bd-log-deriv-zeta},
and for such $s+iy$ one has
$$
\tau=|T+y|+2\ll x+|y|,
$$
by the proposition we get that
\be\label{eq:alpha,y,s-bd-XX}
\alpha(y,s)\ll\big(\log(x\tau)\log\tau\big)^k\ll\euL^{2k},
\qquad\euL\defeq\log(x+|y|).
\ee
Consequently,
\be\label{eq:horiz-up-XX}
\int_{\frac12+\frac1{\log x}+iT}^{\sigma_0+iT}
\alpha(y,s)\,\frac{x^s}{s}\,ds
\ll\frac{\euL^{2k}}{T}\int_{\frac12+\frac1{\log x}}^{\sigma_0}x^\sigma\,d\sigma
<\frac{x^{\sigma_0}\euL^{2k}}{T\log x}
\ll x^{1/2}\euL^{2k}.
\ee
Similarly,
\be\label{eq:horiz-down-XX}
\int_{\frac12+\frac1{\log x}-iT}^{\sigma_0-iT}
\alpha(y,s)\,\frac{x^s}{s}\,ds
\ll x^{1/2}\euL^{2k}.
\ee
On the vertical segment, $s=\frac12+\tfrac{1}{\log x}+it$ with $|t|<T$,
the number $s+iy$ again lies in $\cR_x$. For such $s+iy$ one has
$$
\tau=|t+y|+2\ll x+|y|,
$$
thus Proposition~\ref{prop:condl-bd-log-deriv-zeta} again
yields the uniform bound \eqref{eq:alpha,y,s-bd-XX} along the vertical segment.
Consequently,
\begin{equation}
\label{eq:vert-left-XX}
\int_{\frac12+\frac1{\log x}-iT}^{\frac12+\frac1{\log x}+iT}
\alpha(y,s)\,\frac{x^s}{s}\,ds
\ll x^{\frac12+\frac1{\log x}}\euL^{2k}
\int_{-T}^T\frac{dt}{|t|+1}
\ll x^{1/2}\euL^{2k+1}.
\end{equation}
Combining \eqref{eq:horiz-up-XX}, \eqref{eq:horiz-down-XX}, and
\eqref{eq:vert-left-XX} with our previous estimate \eqref{eq:rush},
we have
\be\label{eq:rush2-XX}
\sum_{n\le x}a_n(y)
=\frac{1}{2\pi i}
\oint_{\sC}\alpha(y,s)\,
\frac{x^s}{s}\,ds+O(x^{1/2}\euL^{2k+1}).
\ee
The result follows from Cauchy's theorem, since the only pole
of the integrand with $\sigma>\frac12$ occurs at the point $s=1-iy$.
Note that this pole does not lie inside $\sC$ when $|y|>T$, and in
that case the integral vanishes from \eqref{eq:rush2-XX}

\section{Proof of Theorems~\ref{thm:main-back} and~\ref{thm:main2-back}}

As in the previous section, we prove both theorems in parallel.

Let the integer $k\ge 2$ be fixed throughout,
and fix $\eps>0$. In what follows, any implied constants in the symbols
$O$ and $\ll$ may depend (where obvious) on $k$ and $\eps$
but are independent of other parameters.

We continue to use notation introduced in the previous section.
In particular, for the proof of Theorem~\ref{thm:main-back} we have
$\alpha(0,s)=(-1)^k\{\zeta'(s)/\zeta(s)\}^k$, and for the proof
of Theorem~\ref{thm:main2-back}, $\alpha(0,s)=(-1)^k\zeta^{(k)}(s)/\zeta(s)$.
In either case, the Laurent series development of $\alpha(0,s)$
at $s=1$ has the form
$$
\alpha(0,s)=f(s)+g(s),\qquad
f(s)\defeq\sum_{m=1}^k\frac{a_m}{(s-1)^m},\qquad
g(s)\defeq\sum_{n=0}^\infty b_n(s-1)^n.
$$
Both $f$ and $g$ continue meromorphically to the 
entire complex plane. Following the method of \cite{GGL}, let us denote
$$
\Psi(x,y)\defeq\sum_{n\le x}a_n(y)
$$
(hence $\Psi=\psi^k$ or $\psi_k$)
and
$$
R(x,y)\defeq\Psi(x,y)-\mathop{{\tt Res}}\limits_{w=1-iy}
\Big(\alpha(y,w)\frac{x^w}{w}\Big).
$$
Our hypothesis (in both theorems) is that
\be\label{eq:hypothesis}
R(x,y)\ll x^{1/2}(x+|y|)^\eps.
\ee
Note that
$$
R(x,y)=\Psi(x,y)-
\mathop{{\tt Res}}\limits_{w=1}
\Big(f(w)\frac{x^{w-iy}}{w-iy}\Big)
$$
since $g$ is analytic in a neighborhood of one.

Let $H$ be the meromorphic function defined in
half-plane $\sigma>2$ by
\dalign{
H(s)&\defeq\int_1^\infty R(x,y)x^{-s}\,dx
=\int_1^\infty\Psi(x,y)x^{-s}\,dx-
\int_1^\infty\mathop{{\tt Res}}\limits_{w=1}
\Big(f(w)\frac{x^{w-iy}}{w-iy}\Big)\,x^{-s}\,dx.
}
In the same half-plane, we have
\dalign{
\int_1^\infty\Psi(x,y)x^{-s}\,dx
&=\int_1^\infty\bigg(\sum_{n\le x}a_n(y)\bigg)x^{-s}\,dx
=\sum_{n=1}^\infty a_n(y)\int_n^\infty x^{-s}\,dx\\
&=\sum_{n=1}^\infty a_n(y)\cdot\frac{n^{1-s}}{s-1}
=\frac{1}{s-1}\,\alpha(y,s-1).
}
Moreover, using Lemma~\ref{lem:saloni} we have
\dalign{
\int_1^\infty\mathop{{\tt Res}}\limits_{w=1}
\Big(f(w)\frac{x^{w-iy}}{w-iy}\Big)x^{-s}\,dx
&=\sum_{m=1}^ka_m\int_1^\infty\mathop{{\tt Res}}\limits_{w=1}
\bigg(\frac{1}{(w-1)^m}\frac{x^{w-iy}}{w-iy}\bigg)x^{-s}\,dx\\
&=\frac{1}{s-1}\sum_{m=1}^ka_m
\bigg(\frac{1}{(s-2+iy)^m}-\frac{1}{(iy-1)^m}\bigg)\\
&=\frac{1}{s-1}\big(f(s-1+iy)+C\big),
}
where $C$ is a constant that depends only on $k$ and $y$.
Therefore, 
$$
H(s)=\frac{1}{s-1}\big(\alpha(y,s-1)-f(s-1+iy)-C\big).
$$
For the sake of clarity, we adopt the following notation:
$$
\hat\sigma\defeq\sigma-1,\quad
\hat t\defeq y+t,\quad
\hat s\defeq \hat\sigma+i\hat t=s-1+iy,\quad
K(\hat s)\defeq H(\hat s+1-iy)=H(s).
$$
Then, throughout the half-plane $\hat\sigma>1$ we have
$$
K(\hat s)
=\frac{1}{\hat s-iy}\big(\alpha(0,\hat s)-f(\hat s)-C\big)
=\frac{1}{\hat s-iy}\big(g(\hat s)-C\big),
$$
and so it is clear that $K(\hat s)$ has meromorphic continuation to the
entire complex plane. Taking into account that $g(\hat s)$ is analytic
at $\hat s=1$, and that the only pole of $f(\hat s)$ 
occurs at $\hat s=1$, we see that the only poles of $K(\hat s)$
in the half-plane $\hat\sigma>0$ occur at zeros of the zeta function.
For any such zero $\rho$, let
$$
k_\star(\rho) \defeq\text{the order of 
the pole of $K(\hat s)$ at $\hat s=\rho$}.
$$
If $\alpha(0,\hat s)=(-1)^k\{\zeta'(\hat s)/\zeta(\hat s)\}^k$
(Theorem~\ref{thm:main-back}), then $k_\star(\rho)=k$ for every zero~$\rho$
of the zeta function.
When $\alpha(0,s)=(-1)^k\zeta^{(k)}(\hat s)/\zeta(\hat s)$
(Theorem~\ref{thm:main2-back}), each zero~$\rho$ gives rise to a pole
of $K(\hat s)$ of order $k_\star(\rho)\le k$. These statements are easily
verified by examining the Taylor series development of $\zeta(\hat s)$ at the
point $\hat s=\rho$.

Our aim is to show that \eqref{eq:hypothesis} implies RH.
Suppose, on the contrary, that \eqref{eq:hypothesis} holds, and 
$\rho_0=\beta_0+i\gamma_0$ is a zero of the zeta function with $\beta_0>\frac12$.
Let $m$ be the multiplicity of $\rho_0$, and define
$$
\kappa(\hat s)\defeq\frac{(\hat s-1)^k\zeta(\hat s)^k}
{(\hat s-\rho_0)^{mk-k_\star(\rho_0)+1}(\hat s+2)^{4k}},\qquad
h(s)\defeq\kappa(s-1+iy)=\kappa(\hat s).
$$
The function $\kappa(\hat s)$ is meromorphic for $\hat s\in\C$
and has been crafted so that (among other things) the product
$\kappa(\hat s)K(\hat s)$ has no pole in the half-plane $\hat\sigma>0$
other than a simple pole at $\hat s=\rho_0$.

We begin by observing that
$$
\frac{1}{2\pi i}\int_{3-i\infty}^{3+i\infty}h(s)H(s)\er^{s\log x}\,ds
=
\frac{1}{2\pi i}\int_{2-i\infty}^{2+i\infty}
\kappa(\hat s)K(\hat s)\er^{(\hat s+1-iy)\log x}\,d\hat s.
$$
In the latter integral, we shift the line of integration left
to the line $\hat\sigma=\tfrac14$, passing only the pole
at $\hat s=\rho_0$; the residue at that point is $c\,x^{\rho_0-iy+1}$,
where
$$
c\defeq \frac{(-1)^k (\rho_0-1)^k(\zeta^{(m)}(\rho_0))^k}{((m-1)!)^k(\rho_0-iy)(\rho_0+2)^{4k}}\quad(\text{Theorem~\ref{thm:main-back}})$$
or
$$c\defeq \frac{(-1)^k {k \choose k_*(\rho_0)} m(m-1)\dots (m-k_*(\rho_0)+1)\zeta^{(m+k-k_*(\rho_0))}(\rho_0)(\zeta^{(m)}(\rho_0))^{k-1}}{(m!)^{k-1}(m+k-k_*(\rho_0))!(\rho_0-iy)(\rho_0+2)^{4k}}\quad(\text{Theorem~\ref{thm:main2-back}}).
$$
Applying the bounds\footnote{More precisely, one has $\zeta^{(j)}(\tfrac14+iv)\ll(1+|v|)^{1/2}(\log(|v|+2))^j$
for fixed $j=0,1,\ldots,k$.}
$$
\zeta^{(j)}(\tfrac14+iv)\ll 1+|v|
\quad\text{and}\quad
f(\tfrac14+iv)\ll 1
\qquad(v\in\R,~j=0,1,\ldots,k),
$$
we derive that
\dalign{
&\int_{1/4-i\infty}^{1/4+i\infty}
\kappa(\hat s)K(\hat s)\er^{(\hat s+1-iy)\log x}\,d\hat s
\ll x^{5/4}\int_{-\infty}^\infty
\Big|\kappa(\tfrac14+iv)K(\tfrac14+iv)\Big|\,dv\\
&\qquad\qquad\ll x^{5/4}\int_{-\infty}^\infty
\frac{(1+|v|)^{2k}\,dv}
{(1+|v-\gamma_0|)^{mk-k_\star(\rho_0)+1}(1+|v-y|)(1+|v|)^{4k}}\ll x^{5/4}.
}
Consequently,
\be\label{eq:one}
\frac{1}{2\pi i}\int_{3-i\infty}^{3+i\infty}h(s)H(s)\er^{s\log x}\,ds
=c\,x^{\rho_0-iy+1}+O(x^{5/4}).
\ee

Next, we evaluate the left side of \eqref{eq:one} in a different way.
Put
\be\label{eq:w(u)-defd}
w(u)\defeq\frac{1}{2\pi i}\int_{3-i\infty}^{3+i\infty}h(s)\er^{us}\,ds.
\ee
We have
$$
h(s)\er^{us}=\frac{(s+iy-2)^k\zeta(s-1+iy)^k\er^{us}}
{(s-1+iy-\rho_0)^{mk-k_\star(\rho_0)+1}(s+iy+1)^{4k}},
$$
When $u<0$, we use the uniform bound
$$
h(s)\er^{us}\ll\frac{\er^{u\sigma}}
{(1+|y+t-\gamma_0|)^{mk-k_\star(\rho_0)+1}(1+|y+t|)^{3k}}
\qquad(\sigma\ge 3).
$$
Shifting the line of integration in the integral
\eqref{eq:w(u)-defd} right to $+\infty$, we conclude that $w(u)=0$.
On other hand, when $u\ge 0$ we shift the line of integration left
to the line $\sigma=-\frac54$. We pass a pole of order $4k$ at
the point $s=-1-iy$, which contributes a residue of size $O(1)$.
Using the bound (cf.~\cite[Cor.~10.5]{MontVau})
$$
|\zeta(s-1+iy)|\ll(1+|y+t|)^{11/4}\qquad(\sigma=-\tfrac54),
$$
it follows that
$$
h(s)\er^{us}\ll\frac{\er^{-5u/4}}
{(1+|y+t-\gamma_0|)^{mk-k_\star(\rho_0)+1} (1+|y+t|)^{k/4}}
$$
on the line $\sigma=-\frac54$, so the integral on the new line
converges absolutely. To summarize, we have shown that
\be\label{eq:wuhan}
w(u)=\begin{cases}
0&\quad\hbox{if $u<0$},\\
O(1)&\quad\hbox{if $u\ge 0$}.
\end{cases}
\ee
Now,
\dalign{
\frac{1}{2\pi i}\int_{3-i\infty}^{3+i\infty}h(s)H(s)\er^{s\log x}\,ds
&=\frac{1}{2\pi i}\int_{3-i\infty}^{3+i\infty}h(s)
\bigg(\int_1^\infty R(z,y)z^{-s}\,dz\bigg)\er^{s\log x}\,ds\\
&=\int_1^\infty R(z,y)
\bigg(\frac{1}{2\pi i}\int_{3-i\infty}^{3+i\infty}h(s)
\er^{s(\log x-\log z)}\,ds\bigg)\,dz\\
&=\int_1^\infty R(z,y)w(\log x-\log z)\,dz\ll\int_1^x R(z,y)\,dz,
}
where we used \eqref{eq:wuhan} in the last step. 

Combining the previous bound with \eqref{eq:one} we see that
$$
x^{\beta_0+1}\ll \big|c\,x^{\rho_0-iy+1}\big|\ll\int_1^x R(z,y)\,dz+x^{5/4},
$$
where $\beta_0$ is the real part of $\rho_0$.
Recalling \eqref{eq:hypothesis} we have
$$
\int_1^x R(z,y)\,dz
\ll\int_1^x z^{1/2}(z+|y|)^\eps\,dz
\ll x^{3/2}(x+|y|)^\eps,
$$
and therefore
$$
x^{\beta_0+1}\ll x^{3/2}(x+|y|)^\eps
$$
for every $\eps>0$. Since $\beta_0>\frac12$, this leads to the desired contradiction, 
concluding the proof of the theorems.


\begin{thebibliography}{99}

\bibitem{GGL}
S.~M.~Gonek, S.~W.~Graham, and Y.~Lee,
The Lindel\"of hypothesis for primes is equivalent to the Riemann hypothesis.
\emph{Proc.\ Amer.\ Math.\ Soc.} 148 (2020), no.~7, 2863--2875.

\bibitem{MontVau}
H.~L.~Montgomery and R.~C.~Vaughan,
\emph{Multiplicative number theory. I. Classical theory.}
Cambridge Studies in Advanced Mathematics, 97.
Cambridge University Press, Cambridge, 2007.

\bibitem{IwanKow}
H.~Iwaniec and E. Kowalski,
\emph{Analytic number theory.}
American Mathematical Society Colloquium Publications, 53.
American Mathematical Society, Providence, RI, 2004.

\bibitem{WW}
E.~T.~Whittaker and G.~N.~Watson,
\emph{A course of modern analysis.}
Fourth edition. Reprinted Cambridge University Press, New York 1962.

\bibitem{vonKoch}
H.~von Koch, 
Sur la distribution des nombres premiers.
\emph{Acta Math.} 24 (1901), no.~1, 159--182.

%\bibitem{RamaSank}
%K.~Ramachandra and A.~Sankaranarayanan,
%Notes on the Riemann zeta-function.
%\emph{J.\ Indian Math.\ Soc.} 57 (1991) 67--77.
%
%\bibitem{Titch}
%Titchmarsh book
%
%\bibitem{Vaal}
%J.~D.~Vaaler,
%Some extremal problems in Fourier analysis.
%\emph{Bull.\ Amer.\ Math.\ Soc.} 12 (1985), 183--216. 

\end{thebibliography}
\end{document}